\documentclass{amsart}

\usepackage{amsthm}

\numberwithin{equation}{section}
\newtheorem{thm}[equation]{Theorem}

\newtheorem{lem}[equation]{Lemma}
\newtheorem{prp}[equation]{Proposition}

\theoremstyle{remark}

\newtheorem{dfn}[equation]{Definition}
\newtheorem{rem}[equation]{Remark}


\newcommand{\EXP}{\ensuremath{\mathbb{E}}}

\newcommand{\var}{\operatorname{Var}}
\newcommand{\esp}{\mathbb{E}}
\newcommand{\pt}{\enspace .}
\newcommand{\ent}{\operatorname{Ent}}
\newcommand{\dis}{\sim}
\newcommand{\proba}{\mathbb{P}}

\begin{document}
\title{Concentration inequalities for  order statistics}
\author{St\'ephane Boucheron \and Maud Thomas}
\address{LPMA CNRS UMR 7599 \& Universit\'e Paris-Diderot} 
\date{\today}
\keywords{concentration inequalities, order statistics}

\begin{abstract}
This note describes non-asymptotic variance and tail bounds for order statistics of samples
of independent identically distributed random variables. Those bounds are checked to be 
asymptotically  tight when the sampling distribution belongs to a maximum domain of 
attraction. If the sampling distribution has non-decreasing hazard rate (this includes the 
Gaussian distribution), we derive an exponential Efron-Stein inequality for order statistics: an inequality connecting the logarithmic moment generating function of centered order statistics 
with exponential moments of Efron-Stein (jackknife) estimates  of variance.  
We use this general connection to derive variance and tail bounds for order statistics
of Gaussian sample. Those bounds are not within the scope of the Tsirelson-Ibragimov-Sudakov 
 Gaussian concentration inequality. 
Proofs are elementary and combine R\'enyi's representation
of order statistics and the so-called entropy approach to concentration inequalities 
popularized by M. Ledoux.
\end{abstract}
\maketitle

\section{Introduction}
\label{sec:introduction}

The purpose of this note is to develop non-asymptotic variance and tail bounds for  order statistics.
In the sequel,  $X_1,\ldots,X_n$ are independent random variables, distributed according to some probability distribution $F$, and  
$X_{(1)}\geq X_{(2)} \geq \ldots \geq X_{(n)}$  denote  
 the corresponding order statistics (the non-increasing rearrangement of $X_1,\ldots,X_n$). 
The cornerstone of Extreme Value Theory (\textsf{EVT}), the Fisher-Tippett-Gnedenko Theorem 
 describes the asymptotic behavior of $X_{(1)}$ after centering and normalization \cite{dHaFei06}. 
The median $X_{(\lfloor n/2\rfloor)}$ is a widely used location estimator. Its asymptotic properties are well documented 
(See for example \cite{vandervaart:1998} for a review). 
Much less seems to be available  if the sample size $n$ is fixed. The distribution function of $X_{(1)}$
 is obviously explicitly known   ($F^n$ !), but simple and  useful variance or tail bounds do not seem to be publicized. 

Our main tools will be the Efron-Stein inequalities  that assert that the jackknife estimate(s)  of the 
variance of functions of independent random variables are on average upper bounds, and extensions of those inequalities
that allow to derive exponential tail bounds (See Theorem \ref{thm:ess:1}).

We refer to \cite{Mil74,ShaoWu89} and references therein for an account of the interplay 
between jackknife estimates, order statistics, extreme value theory and statistical inference.

The search for non-asymptotic variance and tail bounds for extreme order statistics is not only motivated by 
the possible applications  of \textsf{EVT} to quantitative risk management, but also by our desire 
to understand some aspects of the concentration of measure phenomenon \cite{ledoux:2001,massart:2003}. 
Concentration
of measure  theory  tells us that a function of many independent random variables that does not depend too much 
on any of them  is almost constant. The best known results in that field are the Poincar\'e and Gross logarithmic Sobolev inequalities
and the Tsirelson-Ibragimov-Sudakov tail bounds for functions of random Gaussian vectors. If $X_1,\ldots,X_n$
are independent standard Gaussian random variables, and $f\colon\mathbb{R}^n\rightarrow\mathbb{R}$ is $L$-Lipschitz,
then $Z=f(X_1,\ldots,X_n)$ satisfies $\operatorname{Var}(Z)\leq L^2$, $\log \mathbb{E}[\exp(\lambda(Z-\mathbb{E}Z))]\leq \tfrac{\lambda^2L^2}{2}$ 
and $\mathbb{P}\{ Z-\mathbb{E}Z\geq t\}\leq \exp(-\tfrac{t^2}{2L^2})\, .$ If we apply those bounds to $X_{(1)}$ (resp. to $X_{(\lfloor n/2\rfloor )}$) 
that is the maximum (resp. the median) of 
$X_1,\ldots,X_n$, the  Lipschitz constant  is (almost surely) 
 $L=1$, so Poincar\'e inequality allows to establish $\operatorname{Var}(X_{(1)})\leq 1$  (resp. $\operatorname{Var}(X_{(\lfloor n/2\rfloor )})\leq 1$ ).
This easy upper bound is far from being satisfactory, 
it is well-known in \textsf{EVT}  that $\operatorname{Var}(X_{(1)})= O\big(\tfrac{1}{\log n }\big)$ and 
 $\operatorname{Var}(X_{(\lfloor n/2\rfloor )})= O\big(\tfrac{1}{n}\big)$.
Naive use of off-the-shelf concentration bounds does not work when handling maxima or  
order statistics at large. 
This situation is not uncommon. The analysis
of the largest eigenvalue of  random matrices from the Gaussian Unitary Ensemble (\textsf{GUE}) \cite{Led03} 
provides  a setting where the derivation of sharp concentration inequalities require 
ingenuity and combining concentration/hypercontractivity with special representations. 

Our purpose is to show that the tools and methods used to investigate the  concentration of measure phenomenon are relevant 
to the analysis of order statistics. When properly combined with R\'enyi's representation for order statistics (see Theorem \ref{thm:renyi}) 
the so-called entropy method  developed and popularized by Ledoux \cite{ledoux:2001}  allows to recover sharp variance and tail bounds. 
Proofs are elementary and parallel the approach followed in \cite{Led03} in a much more sophisticated setting: whereas Ledoux 
builds on the determinantal structure of the joint density of the eigenvalues of random  matrices from the \textsf{GUE} 
to upper bound tail bounds by sums of Gaussian integrals that can be handled by concentration/hypercontractivity
  arguments, in the sequel,
we build on R\'enyi's representation of order statistics: $X_{(1)},\ldots,X_{(n)}$ can be represented as the image 
of the order statistics of a sample of the exponential distribution by a monotone function. The order statistics 
of an exponential sample turn out to be represented as partial sums of independent random variables.

In Section~\ref{sec:gener-purp-ineq}, using  Efron-Stein inequalities and modified logarithmic Sobolev inequalities, 
we derive simple relations between the variance or the entropy of order statistics $X_{(k)}$  and moments of spacings $\Delta_k=X_{(k)}-X_{(k+1)}$.
When the sampling distribution has non-decreasing hazard rate (a condition that is satisfied by Gaussian, exponential, Gumbel, logistic distributions, etc, see \ref{dfn:hazard} for a definition), 
we are able to build on the connection between the fluctuations of  order statistics $X_{(k)}$ and spacings.
Combining Proposition \ref{thm:maud:1}  and R\'enyi's representation for order statistics, we connect the variance 
and the logarithmic moment generating function of $X_{(k)}$ with moments of spacings, Theorem \ref{prp:var:hazard:dec}
may be considered as 
an exponential Efron-Stein inequality for order statistics. 

In the framework of \textsf{EVT}, those relations are checked to be  asymptotically tight (see Section \ref{sec:assessment}).

In Section \ref{sec:gaussian-samples}, using
explicit bounds on the Gaussian  hazard rate, we derive Bernstein-like inequalities for 
the maximum and the median of a sample of independent Gaussian random variables
with a correct variance and scale factors (Proposition~\ref{prp:order-stat-gauss}). 
We provide non-asymptotic variance bounds for order statistics of Gaussian samples with the right order of magnitude in Propositions \ref{prp:var:gaussian},
and \ref{prp:cheap:tight}.

\section{Order statistics and spacings}
\label{sec:gener-purp-ineq}

Efron-Stein inequalities (\cite{efron:stein:1981}) allow us  to derive upper bounds on  the variance of 
functions of independent random variables.

\begin{thm}\textsf{(Efron-Stein inequalities.)}
Let $f \colon \mathbb{R}^n \rightarrow \mathbb{R}$ be measurable, and let $Z=f(X_1,\ldots,X_n)$.
Let  $Z_i= f_i (X_1,\ldots,X_{i-1},X_{i+1},\ldots, X_n)$ where $f_i \colon \mathbb{R}^{n-1} \rightarrow \mathbb{R}$ is an arbitrary measurable 
function.
Suppose $Z$ is square-integrable. Then 
\label{thm:ess:1}
\begin{displaymath}
\var[Z] \le \sum_{i=1}^n \esp \left[ \left( Z- Z_i \right)^2\right]  \pt 
\end{displaymath}  
\end{thm}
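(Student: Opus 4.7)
The plan is to combine the Doob martingale decomposition of $Z - \EXP Z$ along the natural filtration of $X_1,\ldots,X_n$ with the variational ($L^2$-projection) characterization of conditional expectation. This is the classical route; independence of the $X_i$ enters through a single pivotal identity, and everything else is bookkeeping.

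First I would set $\EXP_i[\cdot] := \EXP[\cdot \mid X_1,\ldots,X_i]$, with $\EXP_0 = \EXP$, and write the telescoping sum
\begin{displaymath}
Z - \EXP Z = \sum_{i=1}^n V_i, \qquad V_i := \EXP_i Z - \EXP_{i-1} Z.
\end{displaymath}
The $(V_i)$ form a martingale-difference sequence for the filtration $\sigma(X_1,\ldots,X_i)$, hence are pairwise orthogonal in $L^2$, giving $\var(Z) = \sum_{i=1}^n \EXP[V_i^2]$.

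Next I would introduce the idealized leave-one-out $Z_i^\star := \EXP[Z \mid X^{(i)}]$, where $X^{(i)} = (X_1,\ldots,X_{i-1},X_{i+1},\ldots,X_n)$. The key identity I need is
\begin{displaymath}
\EXP_{i-1} Z = \EXP_i Z_i^\star,
\end{displaymath}
which uses that $Z_i^\star$ is a function of $X^{(i)}$ and is therefore independent of $X_i$ (so $\EXP_i Z_i^\star = \EXP_{i-1} Z_i^\star$), together with the tower property applied to $\sigma(X_1,\ldots,X_{i-1}) \subseteq \sigma(X^{(i)})$. From this, $V_i = \EXP_i[Z - Z_i^\star]$, and conditional Jensen yields $V_i^2 \le \EXP_i[(Z - Z_i^\star)^2]$; taking expectations gives $\EXP[V_i^2] \le \EXP[(Z - Z_i^\star)^2]$.

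To finish, I invoke the fact that $Z_i^\star$ is by construction the orthogonal $L^2$-projection of $Z$ onto the space of $\sigma(X^{(i)})$-measurable square-integrable functions. It therefore minimizes $\EXP[(Z - g)^2]$ over all such $g$, and in particular $\EXP[(Z - Z_i^\star)^2] \le \EXP[(Z - Z_i)^2]$ for the specific choice $Z_i = f_i(X_1,\ldots,X_{i-1},X_{i+1},\ldots,X_n)$ in the statement. Summing over $i$ closes the argument. The only step that merits attention is the identity $\EXP_{i-1} Z = \EXP_i Z_i^\star$, which is the sole place where the independence of $X_1,\ldots,X_n$ is actually used; without it the leave-one-out $Z_i^\star$ would not absorb the conditional expectation over $X_i$.
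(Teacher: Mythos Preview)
Your argument is correct and is the standard Doob-martingale proof of the Efron--Stein inequality. The paper itself does not supply a proof of this theorem: it is stated as a background result with a citation to Efron and Stein (1981), so there is no in-paper argument to compare against. Your handling of the one genuinely non-formal step---the identity $\EXP_{i-1}Z=\EXP_i Z_i^\star$ via independence of $X_i$ from $X^{(i)}$ and the tower property over $\sigma(X_1,\ldots,X_{i-1})\subset\sigma(X^{(i)})$---is accurate, and the closing $L^2$-projection step is exactly the right way to pass from the ideal leave-one-out $Z_i^\star$ to an arbitrary $Z_i$ (the case where $Z_i\notin L^2$ being trivial since the right-hand side is then $+\infty$).
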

The quantity $\sum_{i=1}^n (Z-Z_i) ^2$ is called a jackknife estimate of variance. 

Efron-Stein inequalities form a special case of a more general collection of inequalities that encompasses 
the so-called modified logarithmic Sobolev inequalities \cite{ledoux:2001}. \\
Henceforth, the entropy of a non-negative random variable $X$ is defined by $\ent[X]= \esp [X \log X] - \esp X \log \esp X$.
 The next inequality from \cite{massart:2000} 
 has been used to derive a variety of 
concentration inequalities \cite{boucheron:lugosi:massart:2003}. 

  \begin{thm}\textsf{(Modified logarithmic Sobolev inequality.)}
    \label{thm:modlogsob}
    Let $\tau(x)=e^x-x-1$. Then for any $\lambda \in \mathbb{R}$, 
    \begin{displaymath}
    \ent\left[e^{\lambda Z}\right] = \lambda \esp \left[ Ze^{\lambda Z}\right] - \esp \left[ e^{\lambda Z}\right] \log \esp \left[ e^{\lambda Z} \right] \le \sum_{i=1}^n \esp \left[ e^{\lambda Z} \tau \left( -\lambda (Z-Z_i) \right) \right] \pt
    \end{displaymath}
  \end{thm}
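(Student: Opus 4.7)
My plan is to reduce the global entropy bound to a one-variable calculation via tensorization of entropy, and then to apply a variational upper bound for entropy at the single-coordinate level, crucially exploiting that $Z_i$ does not depend on $X_i$.

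First I would invoke the tensorization (Han-type sub-additivity) property of entropy: for any non-negative integrable random variable $Y = g(X_1,\ldots,X_n)$,
$$\ent[Y] \le \sum_{i=1}^n \esp\bigl[\ent^{(i)}[Y]\bigr],$$
where $\ent^{(i)}[Y]$ denotes the entropy computed with respect to the conditional law of $X_i$ given $X^{(i)} := (X_1,\ldots,X_{i-1},X_{i+1},\ldots,X_n)$. Taking $Y = e^{\lambda Z}$, it is then enough to control $\ent^{(i)}[e^{\lambda Z}]$ for each $i$ and to sum.

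Next I would apply the elementary variational inequality: for any positive random variable $U$ and any positive constant $c$,
$$\ent[U] \le \esp\bigl[U \log(U/c) - U + c\bigr],$$
which follows from the scalar bound $a \log(a/c) \ge a - c$ taken at $a = \esp[U]$, itself a consequence of the convexity of $x \mapsto x \log x$ (equality at $a = c$, and non-negative second derivative). Conditionally on $X^{(i)}$, the random variable $Z_i$ is a constant, so this bound may be applied inside $\ent^{(i)}$ with the data-dependent choice $c = e^{\lambda Z_i}$, giving
$$\ent^{(i)}\bigl[e^{\lambda Z}\bigr] \le \esp^{(i)}\Bigl[e^{\lambda Z}\bigl(\lambda(Z - Z_i) - 1 + e^{-\lambda(Z - Z_i)}\bigr)\Bigr] = \esp^{(i)}\bigl[e^{\lambda Z}\, \tau(-\lambda(Z - Z_i))\bigr],$$
by the very definition $\tau(x) = e^x - x - 1$.

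Combining tensorization with this pointwise bound and taking full expectations yields the announced inequality. No step is technically delicate; the only idea that is not purely mechanical is the choice $c = e^{\lambda Z_i}$ in place of the ``canonical'' $c = \esp^{(i)}[e^{\lambda Z}]$ (which would only reproduce the definition of $\ent^{(i)}$). That data-dependent choice, which is legitimate precisely because $Z_i$ is $X^{(i)}$-measurable, is exactly what converts the abstract variational bound into the jackknife-flavoured inequality stated in the theorem.
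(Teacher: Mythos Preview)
Your proof is correct and is the standard argument (tensorization of entropy plus the variational bound for entropy with the $X^{(i)}$-measurable choice $c=e^{\lambda Z_i}$). The paper does not actually prove this theorem: it is quoted from \cite{massart:2000}, and the proof there follows exactly the two steps you outline, so there is nothing to compare.
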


Theorems \ref{thm:ess:1} and \ref{thm:modlogsob} provide a transparent connexion between 
 moments of order statistics and  moments of spacings. 

Henceforth, let $\psi\colon \mathbb{R}\rightarrow\mathbb{R}_+$ be defined by $\psi(x)=e^x\tau(-x)=1 + (x-1)e^x$.
  \begin{prp}\textsf{(Order statistics and spacings.)}
\label{thm:maud:1}
For all $1\le k \le n/2$, 
    \begin{displaymath}
      \var [X_{(k)}]  \leq k  \EXP \left[ (X_{(k)}-X_{(k+1)})^2\right] \, 
    \end{displaymath}
and   for all $\lambda \in \mathbb{R}$,
  \begin{displaymath}
  \ent \left[e^{\lambda X_{(k)} }\right] \le k \esp \left[e^{\lambda X_{(k+1)}} \psi( \lambda (X_{(k)} - X_{(k+1)})) \right]\pt
  \end{displaymath}
For all $n/2 < k \le n$, 
\begin{displaymath}
      \var [X_{(k)}]  \leq (n-k+1)  \EXP \left[ (X_{(k-1)}-X_{(k)})^2\right] \, 
    \end{displaymath}
 and for all $\lambda \in \mathbb{R}$,
  \begin{displaymath}
  \ent \left[e^{\lambda X_{(k)} }\right] \le (n-k+1) \esp \left[e^{\lambda X_{(k)}}  \tau( \lambda (X_{(k-1)} - X_{(k)}))  \right]\pt
  \end{displaymath}
\end{prp}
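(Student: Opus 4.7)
The plan is to apply Theorems \ref{thm:ess:1} and \ref{thm:modlogsob} with $Z = X_{(k)}$ and a judicious leave-one-out choice of the $Z_i$'s. The key observation is that removing a single observation from the sample shifts the order statistics in a very predictable way: if $X_i$ is among the top $k$ of the full sample (i.e.\ $X_i \ge X_{(k)}$), then the $k$-th largest of the remaining sample is $X_{(k+1)}$; otherwise, it is still $X_{(k)}$. Exactly $k$ of the indices $i \in \{1,\ldots,n\}$ fall in the first category and $n-k$ in the second.

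For $1 \le k \le n/2$, take $Z_i$ to be the $k$-th largest of $(X_1,\ldots,X_{i-1},X_{i+1},\ldots,X_n)$. By the observation above, $Z - Z_i$ equals $X_{(k)} - X_{(k+1)}$ for the $k$ top indices and $0$ otherwise. Theorem \ref{thm:ess:1} immediately yields
\begin{displaymath}
\var[X_{(k)}] \le \sum_{i=1}^n \EXP[(Z-Z_i)^2] = k\,\EXP\bigl[(X_{(k)}-X_{(k+1)})^2\bigr].
\end{displaymath}
For the entropy inequality, Theorem \ref{thm:modlogsob} with the same $Z_i$ gives
\begin{displaymath}
\ent\bigl[e^{\lambda X_{(k)}}\bigr] \le k\,\EXP\bigl[e^{\lambda X_{(k)}}\tau(-\lambda(X_{(k)}-X_{(k+1)}))\bigr],
\end{displaymath}
since the $n-k$ terms with $Z_i = Z$ vanish because $\tau(0)=0$. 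It remains to factor out $e^{\lambda X_{(k+1)}}$: writing $y = \lambda(X_{(k)}-X_{(k+1)})$, we have $e^{\lambda X_{(k)}} = e^{\lambda X_{(k+1)}}e^{y}$, and $e^y \tau(-y) = e^y(e^{-y}+y-1) = 1+(y-1)e^y = \psi(y)$ by the very definition of $\psi$. This produces the claimed bound.

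For $n/2 < k \le n$, the same strategy works symmetrically if instead we let $Z_i$ be the $(k-1)$-th largest of the sample with $X_i$ removed. Now $Z_i = X_{(k)} = Z$ when $X_i$ is among the top $k-1$ of the full sample (which occurs for $k-1$ indices), and $Z_i = X_{(k-1)}$ otherwise (for $n-k+1$ indices). The variance inequality follows from Theorem \ref{thm:ess:1} verbatim. For the entropy inequality, Theorem \ref{thm:modlogsob} produces
\begin{displaymath}
\ent\bigl[e^{\lambda X_{(k)}}\bigr] \le (n-k+1)\,\EXP\bigl[e^{\lambda X_{(k)}}\tau(-\lambda(X_{(k)}-X_{(k-1)}))\bigr],
\end{displaymath}
and $\tau(-\lambda(X_{(k)}-X_{(k-1)})) = \tau(\lambda(X_{(k-1)}-X_{(k)}))$ gives the stated form directly, with no need to rescale the exponential factor because the upper-side spacing is $X_{(k-1)}-X_{(k)} \ge 0$ and $X_{(k)}$ itself already appears as the exponential weight.

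The only real content is the combinatorial identification of $Z_i$: everything else is a mechanical application of the two master inequalities together with the algebraic identity $e^x\tau(-x)=\psi(x)$. I expect the modest subtlety to lie in choosing the asymmetric side (removing $k$ vs.\ $k-1$ observations) so that the resulting spacing is the one written in the statement, and in noticing that for the $k>n/2$ case the exponential weight is already $e^{\lambda X_{(k)}}$ rather than the larger $e^{\lambda X_{(k-1)}}$, which is why $\tau$ rather than $\psi$ appears there.
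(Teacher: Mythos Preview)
Your proof is correct and follows essentially the same approach as the paper: the same leave-one-out choices of $Z_i$ (rank $k$ from the subsample for $k\le n/2$, rank $k-1$ for $k>n/2$), direct application of Theorems~\ref{thm:ess:1} and~\ref{thm:modlogsob}, and the identity $e^x\tau(-x)=\psi(x)$ to pass from the $\tau$-form to the $\psi$-form in the first entropy bound. Your write-up is in fact slightly more explicit than the paper's in spelling out the combinatorics and the algebraic step.
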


  \begin{proof}
 Let $Z=X_{(k)}$ and for $k\leq n/2$ define $Z_i$ as the rank $k$ statistic from subsample $X_1,\dots,X_{i-1},X_{i+1},\ldots, X_{n}$, that is 
$Z_i = X_{(k+1)}$ if $X_i\geq X_{(k)}$ and $Z_i=Z$ otherwise.
Apply  Theorem \ref{thm:ess:1}.

For $k > n/2$, define $Z_i$ as the rank $k-1$ statistic from $X_1, \ldots, X_{i-1}, X_{i+1},\ldots, X_n$, that is 
$Z_i = X_{(k-1)}$ if $X_i\leq X_{(k)}$ and $Z_i=Z$ otherwise.
Apply  Theorem \ref{thm:ess:1} again. 

For $k\leq n/2 $, define $Z$ and $Z_i$  as before,  apply  Theorem \ref{thm:modlogsob}: 
    \begin{eqnarray*}
    \ent \left[ e^{\lambda X_{(k)}}\right] &\le & k \esp \left[ e^{\lambda X_{(k)} }\tau \left( - \lambda ( X_{(k)} -X_{(k+1)} ) \right) \right] \\
    & = & k \esp \left[ e^{\lambda X_{(k+1)} }e^{\lambda (X_{(k)}-X_{(k+1)}) }\tau \left( - \lambda ( X_{(k)} -X_{(k+1)} ) \right) \right] \\
    &=& k \esp \left[e^{\lambda X_{(k+1)}} \psi( \lambda (X_{(k)} - X_{(k+1)})) \right] \pt
    \end{eqnarray*}
The proof of the last statement proceeds by the same argument.
  \end{proof}
Proposition \ref{thm:maud:1} can be fruitfully complemented by  R\'enyi's representation of order statistics (See \cite{dHaFei06} and references therein).

In the sequel, if $f$  is a monotone 
function from  $(a,b)$  (where  $a$ and $b$  may be infinite) 
to $(c,d)$, its generalized inverse  $f^\leftarrow : (c,d) \rightarrow (a,b)$ is defined by 
$f^\leftarrow(y) = \inf\{ x : a<x< b, f(x)\geq y \}\, $  (See \cite{dHaFei06} for properties of this transformation).  
\begin{dfn}
\label{dfn:U}
The $U$- transform of a distribution function $F$ is defined as a function on $(1,\infty)$ by $U=(1/(1-F))^{\leftarrow}$, 
\begin{math}
U(t) = \inf \{  x~:~F(x)\geq 1-1/t  \}= F^\leftarrow(1-1/t)\pt 
\end{math}
\end{dfn}

R\'enyi's representation   asserts that 
 the order statistics of a sample of independent exponentially distributed random variables are distributed 
as partials sums of independent exponentially distributed random variables. 
  \begin{thm}\textsf{(R\'enyi's representation)}
    \label{thm:renyi}
Let $X_{(1)}\geq\ldots\geq X_{(n)}$ be the order statistics of a sample from distribution $F$, let $U= (1/(1-{F}))^\leftarrow,$
 let $Y_{(1)} \geq Y_{(2)}\geq\ldots \geq Y_{(n)}$ be the order statistics of an independent sample of the exponential distribution, 
then 
\begin{displaymath}
\left( Y_{(n)}, \ldots, Y_{(i)},\ldots,
Y_{(1)} \right) \dis \big( \tfrac{E_n}{n},\ldots, \sum_{k=i}^n \tfrac{E_{k}}{k},\ldots, \sum_{k=1}^n \tfrac{E_{k}}{k}\big)
\end{displaymath}
where $E_1,\ldots,E_k$ are independent and identically distributed standard exponential random variables, and
\begin{displaymath}
      (X_{(n)}, \ldots, X_{(1)} ) \dis \big(U\circ \exp(Y_{(n)}), \ldots , U\circ \exp(Y_{(1)})  \big) \, . 
\end{displaymath}

  \end{thm}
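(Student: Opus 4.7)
The plan is to split the statement into its two independent assertions: (i) the exponential order statistics decompose as the indicated partial sums, and (ii) the general order statistics are obtained from the exponential ones by applying $U\circ\exp$ componentwise. Part (ii) will be an easy consequence of the quantile transform once part (i) is in hand, so the substantive work is concentrated in (i).

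For part (i) I would give a direct joint-density computation, which is cleaner than iterating the memoryless property. Let $E_1,\dots,E_n$ be i.i.d.\ standard exponentials. On the simplex $\{y_n<y_{n-1}<\cdots<y_1\}$, the joint density of $(Y_{(n)},\dots,Y_{(1)})$ is $n!\,\exp(-\sum_{i=1}^n y_i)$. I would then perform the change of variables $s_k=Y_{(k)}-Y_{(k+1)}$ for $k<n$ and $s_n=Y_{(n)}$, whose Jacobian equals $1$, and use $y_i=\sum_{k=i}^n s_k$ to rewrite
\begin{displaymath}
\sum_{i=1}^n y_i \;=\; \sum_{i=1}^n \sum_{k=i}^n s_k \;=\; \sum_{k=1}^n k\,s_k.
\end{displaymath}
The joint density of $(s_1,\dots,s_n)$ on $\mathbb{R}_+^n$ therefore factorises as $\prod_{k=1}^n k\,e^{-k s_k}$, which is the density of independent $\mathrm{Exp}(k)$ variables. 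Writing each $\mathrm{Exp}(k)$ as $E_k/k$, we recover $Y_{(i)}=\sum_{k=i}^n E_k/k$ jointly in distribution, which is exactly the first display.

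For part (ii) I would invoke the quantile transform: if $E\sim\mathrm{Exp}(1)$ then $1-e^{-E}$ is uniform on $(0,1)$, hence $F^\leftarrow(1-e^{-E})=U(e^E)$ has distribution $F$. Thus letting $E_1,\dots,E_n$ be i.i.d.\ standard exponentials, the variables $X'_i:=U(e^{E_i})$ form an i.i.d.\ sample from $F$, so $(X'_{(1)},\dots,X'_{(n)})\stackrel{d}{=}(X_{(1)},\dots,X_{(n)})$. Since $U\circ\exp$ is non-decreasing, the order statistics of the $X'_i$ coincide with $(U(e^{E_{(i)}}))_i$. Replacing $(E_{(1)},\dots,E_{(n)})$ by the equidistributed $(Y_{(1)},\dots,Y_{(n)})$ yields the second display.

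The only delicate point I anticipate is handling the generalised inverse $U=F^\leftarrow(1-1/\cdot)$ when $F$ is not continuous or not strictly increasing. I would address this by using the standard fact (recalled in the reference to \cite{dHaFei06}) that for $V\sim\mathrm{Uniform}(0,1)$ one has $F^\leftarrow(V)\sim F$ for any distribution function $F$, which justifies the identity $U(e^E)\sim F$ without any regularity assumption. The monotonicity of $U\circ\exp$, which is all that is needed to commute ordering with the transformation, holds unconditionally since the generalised inverse of a non-decreasing function is non-decreasing.
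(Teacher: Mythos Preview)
The paper does not actually prove Theorem~\ref{thm:renyi}; it is stated as a classical result with a reference to \cite{dHaFei06}. Your argument is the standard one and is correct: the joint-density computation for the exponential spacings and the quantile-transform step are both sound, and you correctly handle the generalised-inverse subtlety by invoking $F^\leftarrow(V)\sim F$ for uniform $V$ together with the monotonicity of $U\circ\exp$.
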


We may readily test the tightness of propositions \ref{thm:maud:1}. 
By Theorem \ref{thm:renyi}, $Y_{(k)} = \frac{E_n}{n} +\ldots + \frac{E_k}{k}$ and 
\begin{math}
\var[Y_{(k)}] 
= \sum_{i=k}^n \tfrac{1}{i^2}\pt
\end{math} 
Hence, for any  sequence $(k_n)_n$ with $\lim_n k_n= \infty$, and $\limsup k_n/n< 1$,  
\begin{math}
\lim_{n\rightarrow \infty}k_n \var[Y_{(k_n)}]= 1 , 
\end{math}
while  by Proposition \ref{thm:maud:1}, 
\begin{math}
\var[Y_{(k)}] \le k \esp \big[ \big( {E_k}/{k} \big)^2 \big] = \tfrac{2}{k}.
\end{math}

The next  condition makes  combining  Propositions \ref{thm:maud:1} and Theorem \ref{thm:renyi} easy.
\begin{dfn}\textsf{(Hazard rate.)}
\label{dfn:hazard}  
The hazard rate of an absolutely continuous probability distribution with distribution function $F$ is:
\begin{math}
h = {f}/{\overline{F}} 
\end{math}
where  $f$ and $\overline{F}=1-F$ are respectively the density and the survival function associated with $F$. 
\end{dfn}

From elementary calculus, we get   $(U\circ \exp)'= 1/h(U\circ\exp)$ where $U=(1/(1-F))^\leftarrow$, which translates into
\begin{prp}
  \label{prp:incr:hazard}
Let  $F$ be an absolutely continuous distribution function with hazard rate $h$, 
let  $U=(1/(1-F))^\leftarrow$. 
Then,  $h$ is non-decreasing if and only if $U \circ \exp$ is concave.
\end{prp}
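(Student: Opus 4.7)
The plan is to reduce concavity of $V := U \circ \exp$ to monotonicity of its derivative $V'$, then to use the identity $V' = 1/(h\circ V)$ flagged in the excerpt to translate this into monotonicity of $h$. Concretely, I aim to establish the chain of equivalences: $V$ concave $\Leftrightarrow$ $V'$ non-increasing $\Leftrightarrow$ $1/(h\circ V)$ non-increasing $\Leftrightarrow$ $h\circ V$ non-decreasing $\Leftrightarrow$ $h$ non-decreasing on the support of $F$.

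First I would establish the derivative formula. Since $F$ is absolutely continuous it is continuous, so the generalized inverse satisfies $F(U(t)) = 1 - 1/t$, i.e.\ $\overline{F}(U(t)) = 1/t$, for every $t > 1$. Setting $t = e^y$ with $y > 0$ yields $\overline{F}(V(y)) = e^{-y}$. Differentiating both sides with the chain rule gives
\begin{displaymath}
f(V(y)) \, V'(y) \; = \; e^{-y} \; = \; \overline{F}(V(y)) \pt
\end{displaymath}
Hence $V'(y) = 1/h(V(y))$ wherever $f$ is positive. In particular $V' > 0$ and $V$ is strictly increasing, so it realises a bijection from $(0,\infty)$ onto the essential support of $F$.

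The remainder is then immediate. As $V$ is differentiable, it is concave if and only if $V'$ is non-increasing, which by the formula above is equivalent to $h\circ V$ being non-decreasing; and since $V$ is a strictly increasing bijection onto the support of $F$, the latter is equivalent to $h$ itself being non-decreasing on its natural domain.

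I do not anticipate a genuine obstacle: the argument is a short chain of equivalences once the derivative identity is in hand. The only care needed is the usual bookkeeping between generalized and ordinary inverses, but absolute continuity of $F$ makes $F(U(t)) = 1 - 1/t$ a true equality and guarantees that $V$ is differentiable with the stated derivative on the relevant interval, which is all the argument uses.
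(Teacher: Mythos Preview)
Your proposal is correct and follows essentially the same approach as the paper: the paper simply records the identity $(U\circ\exp)'=1/h(U\circ\exp)$ as ``elementary calculus'' and states that the proposition follows, while you spell out both the derivation of this identity (via $\overline{F}(V(y))=e^{-y}$) and the chain of equivalences linking concavity of $V$ to monotonicity of $h$. Your write-up is a faithful, more detailed version of the paper's one-line argument.
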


Observe that if the hazard rate  $h$ is non-decreasing, then 
for all $t>0$ and $x>0$, 
\begin{math}
U \left( \exp({t+x}) \right) - U \left( \exp(t) \right) \le {x}/{h(U(\exp(t)))}  \pt
\end{math}
Moreover, assuming that the hazard rate is non-decreasing  warrants negative association between spacings  and related order statistics.
\begin{prp}
\label{prp:hazard:rate:neg:assoc}
If $F$  has non-decreasing hazard rate, then the $k^{\text{th}}$ spacing $\Delta_k =X_{(k)}-X_{(k+1)}$ and $X_{(k+1)}$  are negatively associated:  
for any pair of non-decreasing functions $g_1$ and $g_2$,  
\begin{displaymath}
\esp[g_1(X_{(k+1)})g_2(\Delta_k)] \le \esp[g_1(X_{(k+1)})] \esp[g_2(\Delta_k)] \pt
\end{displaymath}
\end{prp}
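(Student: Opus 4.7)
The plan is to reduce the negative association assertion to a one-dimensional Chebyshev (monotone rearrangement) inequality by exploiting R\'enyi's representation together with the concavity of $U\circ \exp$ provided by Proposition~\ref{prp:incr:hazard}.

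First, I set $Y = Y_{(k+1)}$ and $E=E_k$, where the $E_j$ are the i.i.d.\ standard exponentials of Theorem~\ref{thm:renyi}. Then $Y$ and $E$ are independent, $Y_{(k)}$ is distributed as $Y + E/k$, and R\'enyi's representation gives the joint equality in law
\begin{displaymath}
\bigl( X_{(k+1)},\, X_{(k)} \bigr) \dis \bigl( U(e^{Y}),\, U(e^{Y+E/k}) \bigr),
\end{displaymath}
so that $\Delta_k \dis U(e^{Y+E/k}) - U(e^{Y})$. The crucial structural fact is that, by Proposition~\ref{prp:incr:hazard}, the non-decreasing hazard rate assumption is equivalent to concavity of $\varphi := U\circ \exp$. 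Hence, for every fixed $e \ge 0$, the map
\begin{displaymath}
y \;\longmapsto\; \varphi(y + e/k) - \varphi(y)
\end{displaymath}
is non-increasing in $y$, while $y \mapsto \varphi(y)$ itself is non-decreasing.

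Now I condition on $E$. Let $g_1, g_2$ be non-decreasing. Since $g_1(X_{(k+1)}) = g_1(\varphi(Y))$ does not depend on $E$, and for each fixed value $E = e$ the function
\begin{displaymath}
y \;\longmapsto\; g_2\bigl( \varphi(y + e/k) - \varphi(y) \bigr)
\end{displaymath}
is non-increasing in $y$ (composition of the non-decreasing $g_2$ with a non-increasing function), while $y \mapsto g_1(\varphi(y))$ is non-decreasing, the classical Chebyshev sum inequality applied to the law of $Y$ yields, for every $e \ge 0$,
\begin{displaymath}
\esp\bigl[ g_1(\varphi(Y))\, g_2(\varphi(Y+e/k)-\varphi(Y)) \bigr] \le \esp\bigl[ g_1(\varphi(Y)) \bigr] \, \esp\bigl[ g_2(\varphi(Y+e/k)-\varphi(Y)) \bigr] .
\end{displaymath}
Integrating both sides against the law of $E$, the left-hand side becomes $\esp[g_1(X_{(k+1)}) g_2(\Delta_k)]$, the first factor on the right is unchanged, and Fubini turns the second factor into $\esp[g_2(\Delta_k)]$. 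This is the desired inequality.

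The only subtle point is the monotonicity step — everything else is bookkeeping around R\'enyi's representation. The one-dimensional Chebyshev inequality is standard (it can be proved by the identity $2\,\mathrm{Cov}(f(Y),g(Y')) = \esp[(f(Y)-f(Y'))(g(Y)-g(Y'))]$ with an independent copy $Y'$ of $Y$, the product being non-negative when $f,g$ are comonotone and non-positive when anti-monotone), so no further effort is needed there.
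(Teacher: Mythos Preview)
Your proof is correct and uses exactly the same ingredients as the paper (R\'enyi's representation, concavity of $U\circ\exp$ from Proposition~\ref{prp:incr:hazard}, and Chebyshev's association inequality applied to the law of $Y_{(k+1)}$). The only cosmetic difference is the conditioning order: the paper conditions on $Y_{(k+1)}$ and checks that the conditional expectation $\esp[g_2(\Delta_k)\mid Y_{(k+1)}]$ is non-increasing in $Y_{(k+1)}$ before invoking Chebyshev, whereas you condition on $E$ and apply Chebyshev pointwise in $e$ before integrating --- both routes are equivalent.
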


\begin{proof}
Let $Y_{(n)},\ldots,Y_{(1)}$ be the order statistics of an exponential sample. Let $E_k=Y_{(k)}-Y_{(k+1)}$ be the $k^{\text{th}}$ spacing 
of the exponential sample. By Theorem \ref{thm:renyi},  $E_k$  and $Y_{(k+1)}$ are independent. 
Let $g_1$ and $g_2$ be two non-decreasing functions. By Theorem \ref{thm:renyi},
  \begin{eqnarray*}
\esp [g_1(X_{(k+1)}) g_2(\Delta_k) ] &= &\esp[ g_1 (U (e^{Y_{(k+1)}})) g_2 ( U(e^{E_k+Y_{(k+1)}}) - U(e^{Y_{(k+1)}}))] \\
&= &  \esp\left[ \esp\left[ g_1 (U (e^{Y_{(k+1)}})) g_2 ( U(e^{E_k+Y_{(k+1)}}) - U(e^{Y_{(k+1)}}))\mid Y_{(k+1)} 
\right] \right] \\
& = & \esp\left[ g_1 (U (e^{Y_{(k+1)}})) \esp\left[  g_2 ( U(e^{E_k+Y_{(k+1)}}) - U(e^{Y_{(k+1)}}))\mid Y_{(k+1)} 
\right] \right] \pt
  \end{eqnarray*}
The function $ g_1 \circ U \circ \exp$ is  non-decreasing. Almost surely, as the conditional distribution of $k E_k$ with respect 
to $Y_{(k+1)}$ is the  exponential distribution,  
\begin{eqnarray*}
  \esp\left[  g_2 ( U(e^{E_k+Y_{(k+1)}}) - U(e^{Y_{(k+1)}}))\mid Y_{(k+1)} 
\right] & = & \int_0^\infty e^{-x} g_2 (U(e^{\frac{x}{k}+Y_{(k+1)}}) - U(e^{Y_{(k+1)}})) \mathrm{d}x\pt
\end{eqnarray*}
As $F$  has non-decreasing hazard rate, $ U(\exp({x/k+y})) - U(\exp(y))=\int_0^{x/k} (U\circ\exp)'(y+z) \mathrm{d}z$ is non-increasing with respect to $y$. \\
This entails that $\esp\left[  g_2 ( U(e^{E_k+Y_{(k+1)}}) - U(e^{Y_{(k+1)}}))\mid Y_{(k+1)} 
\right]$ is a non-increasing function of $Y_{(k+1)}$. 
Hence, by Chebyshev's association  inequality,
 \begin{eqnarray*}
\lefteqn{\esp [g_1(X_{(k+1)}) g_2(\Delta_k) ] }
\\
& \leq &  \esp\left[ g_1 (U (e^{Y_{(k+1)}})) \right]\times
\esp \left[ \esp\left[g_2 ( U(e^{E_k+Y_{(k+1)}}) - U(e^{Y_{(k+1)}}))\mid 
Y_{(k+1)}\right] \right] \\
& = & \esp\left[ g_1 (U (e^{Y_{(k+1)}}))\right] \times \esp\left[g_2 ( U(e^{E_k+Y_{(k+1)}}) - U(e^{Y_{(k+1)}}))\right]\\
& = & \esp\left[ g_1 (X_{(k+1)})\right] \times \esp\left[g_2 (\Delta_k)\right] \pt 
  \end{eqnarray*}
\end{proof}

Negative association between order statistics and 
spacings allows us 
to establish our main result. 
 
\begin{thm}
\label{prp:var:hazard:dec}
Let    $X_1,\ldots,X_n$ be independently distributed according to $F$, let $X_{(1)}\geq \ldots\geq X_{(n)}$ be the order statistics and
let  $\Delta_k =X_{(k)}-X_{(k+1)}$ be the $k^{\text{th}}$ spacing.
Let $V_k= k \Delta_k^2$ denote the Efron-Stein estimate of the variance of $X_{(k)}$ (for $k=1,\ldots,n/2$). 

 If $F$  has non-decreasing hazard rate $h$, then for $1 \leq k\leq n/2$,
\begin{displaymath}
  \var\big[X_{(k)} \big] \leq \esp V_k \leq  \frac{2}{k}\EXP \left[  \big( \tfrac{1}{h(X_{(k+1)})}\big)^2\right] \, , 
\end{displaymath}
while for $k > n/2$,  
\begin{displaymath}
  \var\big[X_{(k)} \big] \leq \frac{2(n-k+1)}{(k-1)^2}\EXP \left[  \big( \tfrac{1}{h(X_{(k)})}\big)^2\right] \, .
\end{displaymath}
For $\lambda\geq 0$, and $1\leq k \leq n/2, $
\begin{equation}\label{eq:1}
 \log \esp e^{\lambda(X_{(k)} -\esp X_{(k)} ) } \leq \lambda \frac{k}{2} \esp \left[\Delta_k \left(e^{\lambda\Delta_k}-1\right) \right] 
= \lambda \frac{k}{2} \esp \left[\sqrt{\frac{V_k}{k}} \left(e^{\lambda\sqrt{V_k/k}}-1\right) \right] \, . 
\end{equation}
\end{thm}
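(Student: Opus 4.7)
My plan is to prove the three statements by combining Proposition~\ref{thm:maud:1} with R\'enyi's representation for the variance bounds, and with a Herbst-style argument driven by the negative association of Proposition~\ref{prp:hazard:rate:neg:assoc} for the exponential bound. I first dispose of the variance bounds. Proposition~\ref{thm:maud:1} already yields $\var[X_{(k)}] \le k\, \esp[\Delta_k^2]$ for $1 \le k \le n/2$, so I only need to control $\esp[\Delta_k^2]$. By Theorem~\ref{thm:renyi}, $\Delta_k$ has the same law as $U(e^{Y_{(k+1)} + E_k/k}) - U(e^{Y_{(k+1)}})$ with $E_k$ standard exponential and \emph{independent} of $Y_{(k+1)}$, hence of $X_{(k+1)}$. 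The monotone hazard rate hypothesis, through the inequality displayed just after Proposition~\ref{prp:incr:hazard}, gives $\Delta_k \le E_k/(k\, h(X_{(k+1)}))$ almost surely; squaring, using this independence and $\esp[E_k^2]=2$, delivers $\esp[\Delta_k^2] \le (2/k^2)\, \esp[1/h(X_{(k+1)})^2]$. The first variance bound follows. The case $k > n/2$ is symmetric: the second half of Proposition~\ref{thm:maud:1} involves $\Delta_{k-1}$, and the same R\'enyi trick with $Y_{(k-1)} - Y_{(k)} = E_{k-1}/(k-1)$ produces the stated constant.

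For the exponential inequality I start from the entropy bound $\ent[e^{\lambda X_{(k)}}] \le k\, \esp[e^{\lambda X_{(k+1)}} \psi(\lambda \Delta_k)]$ of Proposition~\ref{thm:maud:1}. A direct calculation shows $\psi'(x) = xe^x$, so $\psi$ is non-decreasing on $[0,\infty)$; for $\lambda \ge 0$, both $y \mapsto e^{\lambda y}$ and $\delta \mapsto \psi(\lambda \delta)$ are non-decreasing, and Proposition~\ref{prp:hazard:rate:neg:assoc} gives
\[
\esp[e^{\lambda X_{(k+1)}} \psi(\lambda \Delta_k)] \le \esp[e^{\lambda X_{(k+1)}}]\, \esp[\psi(\lambda \Delta_k)] \le \esp[e^{\lambda X_{(k)}}]\, \esp[\psi(\lambda \Delta_k)]\pt
\]
Setting $\Phi(\lambda) := \log \esp e^{\lambda(X_{(k)} - \esp X_{(k)})}$, the identity $\lambda \Phi'(\lambda) - \Phi(\lambda) = \ent[e^{\lambda X_{(k)}}]/\esp[e^{\lambda X_{(k)}}]$ translates the previous display into the Herbst-type differential inequality $\lambda \Phi'(\lambda) - \Phi(\lambda) \le k\, \esp[\psi(\lambda \Delta_k)]$, equivalently $(\Phi(\lambda)/\lambda)' \le k\, \esp[\psi(\lambda \Delta_k)]/\lambda^2$.

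The key analytic step is the antiderivative identity $\frac{d}{du}[(e^u - 1)/u] = \psi(u)/u^2$, which together with $\lim_{u \to 0}(e^u-1)/u = 1$ and a change of variable yields $\int_0^\lambda \psi(\mu t)/\mu^2\, d\mu = (e^{\lambda t} - 1)/\lambda - t$ for every $t \ge 0$. Integrating the above inequality from $0$ to $\lambda$ (with $\Phi(\lambda)/\lambda \to \Phi'(0) = 0$) and applying Fubini delivers $\Phi(\lambda) \le k\, \esp[\tau(\lambda \Delta_k)]$. The statement then follows from the elementary scalar bound $\tau(x) \le x(e^x - 1)/2$ for $x \ge 0$, which one checks by noting that the difference vanishes to second order at $0$ and has second derivative $xe^x/2 \ge 0$. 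The main technical hurdle, to my mind, is precisely this last chain: one must recognise that $\psi(u)/u^2$ integrates cleanly to $(e^u-1)/u$ for the Herbst scheme to produce a closed-form bound, and then pass from $\tau$ to the form $\Delta_k(e^{\lambda \Delta_k} - 1)$ advertised in~(\ref{eq:1}).
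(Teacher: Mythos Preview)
Your proof is correct. The variance bounds are handled exactly as in the paper, via Proposition~\ref{thm:maud:1}, R\'enyi's representation, and the concavity of $U\circ\exp$; the case $k>n/2$ is treated correctly (the paper does not spell it out).

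For the exponential inequality you and the paper coincide up to the decoupling step $\ent[e^{\lambda X_{(k)}}]\le k\,\esp[e^{\lambda X_{(k)}}]\,\esp[\psi(\lambda\Delta_k)]$, but then take a different Herbst route. The paper writes $\esp[\psi(\lambda\Delta_k)]=\int_0^\lambda sG''(s)\,ds$ with $G(\lambda)=\esp e^{\lambda\Delta_k}$, bounds this by $\tfrac{\lambda^2}{2}G''(\lambda)$ using monotonicity of $G''$ (since $\Delta_k\ge 0$), and then integrates $(\Phi/\lambda)'\le \tfrac{k}{2}G''$ to reach the stated bound directly. You instead integrate $(\Phi/\lambda)'\le k\,\esp[\psi(\lambda\Delta_k)]/\lambda^2$ exactly, exploiting the antiderivative identity $\tfrac{d}{du}\bigl[(e^u-1)/u\bigr]=\psi(u)/u^2$, to obtain the intermediate bound $\Phi(\lambda)\le k\,\esp[\tau(\lambda\Delta_k)]$, and only then weaken via $\tau(x)\le \tfrac{x}{2}(e^x-1)$. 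Your path actually produces a slightly sharper intermediate inequality before matching~(\ref{eq:1}); the paper's path is a touch shorter because it avoids the pointwise bound on $\tau$ and lands on $G'(\lambda)-G'(0)=\esp[\Delta_k(e^{\lambda\Delta_k}-1)]$ immediately after integration.
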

Inequality~\eqref{eq:1}  may be considered as an exponential Efron-Stein inequality  for 
order-statistics:  it connects the logarithmic moment generating function of 
the $k^{\text{th}}$ order statistic with the exponential moments of the square root 
of the Efron-Stein estimate of variance $k\Delta_k^2$. This connection provides 
 correct bounds  for exponential distribution whereas the exponential Efron-Stein 
inequality described in \cite{boucheron:lugosi:massart:2003}
does not. This comes from the fact that negative association  between spacing 
and order statistics leads to an easy decoupling argument, there is no need to resort 
to the variational representation of entropy as in \cite{boucheron:lugosi:massart:2003}. It is then possible to 
carry out the so-called Herbst's argument in an effortless way. 
\begin{proof}Throughout the proof $Y_{(k)}, Y_{(k+1)}$ denote the $k^{\text{th}}$ and $k+1^{\text{th}}$ order 
statistics of an exponential sample of size $n.$ 
By Proposition~\ref{thm:maud:1}, using R\'enyi's representation (Theorem~\ref{thm:renyi}), and Proposition~\ref{prp:incr:hazard}, for 
$k\leq n/2$, 
\begin{eqnarray*}
  \var[ X_{(k)}] &\le& k \esp \left[ \left(U\left( e^{Y_{(k+1)}}e^{Y_{(k)}-Y_{(k+1)}} \right) - U \left( e^{Y_{(k+1)}} \right)\right)^2\right] \\
  & \le& k\esp \left[ \left( e^{Y_{(k+1)}} U' \left( e^{Y_{(k+1)}} \right) \right)^2 \left( Y_{(k)} -Y_{(k+1)} \right)^2\right]\\
& \leq & \frac{2}{k} \EXP \left[  \Big( \tfrac{1}{h(X_{(k+1)})}\Big)^2\right] \enspace ,
\end{eqnarray*}
as by Theorem \ref{thm:renyi}, 
\begin{math}
Y_{(k)}-Y_{(k+1)}  
\end{math}  is independent of $Y_{(k+1)}$ and exponentially distributed with scale parameter $1/k$.

By Proposition \ref{thm:maud:1} and~\ref{prp:incr:hazard}, as $\psi$ is non-decreasing over $\mathbb{R}_+$, 
\begin{eqnarray*}
  \ent \left[e^{\lambda X_{(k)} }\right] &\le & k \esp \left[e^{\lambda X_{(k+1)}} \psi( \lambda \Delta_k) \right]\\
& \leq & k  \esp \left[ e^{\lambda X_{(k+1)}} \right] \times
 \esp \left[ \psi( \lambda \Delta_k) \right] \\
& \leq & k  \esp \left[ e^{\lambda X_{(k)}} \right] \times
 \esp \left[ \psi( \lambda \Delta_k) \right] \, . 
\end{eqnarray*}
Multiplying both sides by $\exp(-\lambda \esp X_{(k)})$, 
\begin{eqnarray*}
  \ent \left[e^{\lambda (X_{(k)}-\esp X_{(k)}) }\right] & \leq & k  \esp \left[ e^{\lambda (X_{(k)}-\esp X_{(k)})} \right] \times
 \esp \left[ \psi( \lambda \Delta_k) \right] \pt
\end{eqnarray*}
Let $G(\lambda)= \esp e^{\lambda \Delta_k}$. Obviously, $G(0)=1$, and  
as $\Delta_k\geq 0$, $G$ and its derivatives are increasing on $[0,\infty)$, 
\begin{displaymath}
   \esp \left[ \psi( \lambda \Delta_k) \right]  =  1 - G(\lambda) + \lambda G'(\lambda)
 =  \int_0^\lambda sG^{\prime\prime}(s)\mathrm{d}s 
 \leq  G^{\prime\prime}(\lambda) \frac{\lambda^2}{2} \, . 
\end{displaymath}
 Hence, for $\lambda\geq 0,$
\begin{displaymath}
 \frac{\ent \left[e^{\lambda (X_{(k)}-\esp X_{(k)}) }\right] }{\lambda^2 \esp \left[ e^{\lambda (X_{(k)}-\esp X_{(k)})} \right]} =\frac{\mathrm{d}\frac{1}{\lambda} \log \esp e^{\lambda(X_{(k)} -\esp X_{(k)} ) } }{\mathrm{d}\lambda} 
\leq \frac{k}{2}\frac{\mathrm{d} G'}{\mathrm{d}\lambda} \, .
\end{displaymath}
Integrating both sides, using the fact that $\lim_{\lambda \rightarrow 0}  \frac{1}{\lambda} \log \esp e^{\lambda(X_{(k)} -\esp X_{(k)} ) } =0, $
\begin{displaymath}
  \frac{1}{\lambda} \log \esp e^{\lambda(X_{(k)} -\esp X_{(k)} ) }  \leq \frac{k}{2} (G'(\lambda)-G'(0))=  
\frac{k}{2} \esp \left[\Delta_k \left(e^{\lambda\Delta_k}-1\right) \right]  \, . 
\end{displaymath}
\end{proof}
\section{Asymptotic assessment}
\label{sec:assessment}

Assessing the quality of the variance  bounds from Proposition \ref{thm:maud:1} in full generality is not easy. However, Extreme Value Theory (\textsf{EVT})
describes 
a framework where the Efron-Stein estimates of variance are asymptotically of the right order of magnitude.  
\begin{dfn}
 The distribution function  $F$ belongs to a maximum domain of attraction  with tail  index $\gamma \in \mathbb{R}$
($F\in \textsf{MDA}(\gamma)$), 
if and only if there exists a non negative auxiliary function 
$a$ on $[1,\infty)$ such that for $x\in [0,\infty)$ (if $\gamma>0$), $\in [0,-1/\gamma)$ (if $\gamma<0$), $x\in \mathbb{R}$ (if $\gamma=0$)
$$\lim_n \mathbb{P} \left\{ \frac{\max(X_1,\ldots,X_n)-F^\leftarrow(1-1/n)}{a(n)} \leq x \right\}
= \exp(-(1+\gamma x)^{-1/\gamma}) \, . $$
If $\gamma=0,$ $(1+\gamma x)^{-1/\gamma}$ should be read as $\exp(-x).$
\end{dfn}
 If $F \in \mathsf{MDA}(\gamma)$ and has finite variance ($\gamma<1/2$),     the variance of 
$(\max(X_1,\ldots,X_n)-F^\leftarrow(1-1/n))/a(n)$ converges to the variance of the limiting extreme value distribution~\cite{dHaFei06}.

 Membership in a maximum domain of attraction
is characterized by the \emph{extended regular variation} property of $U= (1/(1-F))^{\leftarrow}$:  $F\in \mathsf{MDA}(\gamma)$ with auxiliary
function $a$ iff for all $x>0$
\begin{displaymath}
  \lim_{t\rightarrow \infty} \frac{U(tx)-U(t)}{a(t)}   = \frac{x^\gamma-1}{\gamma} \, ,
\end{displaymath}
where the right-hand-side should be read as $\log x$  when $\gamma=0$
%
\cite{dHaFei06}.

Using Theorem 2.1.1 and Theorem 5.3.1 from  \cite{dHaFei06}, and performing simple calculus,  we readily obtain
\begin{prp}\label{prop:assessment:var:bound}
  Assume $X_{(1)}\geq \ldots \geq  X_{(n)}$ are the order  statistics of an independent sample distributed according to $F$, where
 $F\in \textsf{MDA}(\gamma), \gamma < 1/2$ with auxiliary function~$a$.
Then 
\begin{displaymath}
  \lim_n \tfrac{\esp\left[\left( (X_{(1)} -X_{(2)})\right)^2\right]}{a(n)^2} = \tfrac{2\Gamma(2(1-\gamma))}{(1-\gamma)(1-2\gamma)}
\quad
\mathrm{ while }\quad 
\lim_n \tfrac{\var\left( X_{(1)} \right)}{a(n)^2} = \frac{1}{\gamma{^2}}  \left( \Gamma(1-2\gamma) -\Gamma(1-\gamma)^2\right)  . 
\end{displaymath}
For $\gamma=0$, the last expression  should be read as $\pi^2/6.$
\end{prp}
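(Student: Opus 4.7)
The plan is to combine R\'enyi's representation (Theorem~\ref{thm:renyi}) with the extended regular variation property of $U$ (Theorem 2.1.1 in \cite{dHaFei06}) to pin down the weak limits of $(X_{(1)}-X_{(2)})/a(n)$ and of $(X_{(1)}-F^\leftarrow(1-1/n))/a(n)$, and then to invoke Theorem 5.3.1 in \cite{dHaFei06} to upgrade those weak limits to $L^2$-convergence.

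For the spacing, R\'enyi gives $X_{(1)}-X_{(2)} \dis U(e^{Y_{(2)}}e^{E_1})-U(e^{Y_{(2)}})$, with $E_1\sim\exp(1)$ independent of $Y_{(2)}=\sum_{k=2}^nE_k/k$. Since $e^{-Y_{(2)}}$ is distributed as the second smallest of $n$ i.i.d.\ uniform variables on $(0,1)$, $T_n:=ne^{-Y_{(2)}}$ converges in distribution to the second arrival time $\Gamma_2$ of a unit-rate Poisson process, whose density on $(0,\infty)$ is $t\,e^{-t}$, and this limit is independent of $E_1$. Writing
\begin{displaymath}
\frac{X_{(1)}-X_{(2)}}{a(n)}=\frac{U\bigl((n/T_n)e^{E_1}\bigr)-U(n/T_n)}{a(n/T_n)}\cdot\frac{a(n/T_n)}{a(n)},
\end{displaymath}
extended regular variation makes the first factor converge to $(e^{\gamma E_1}-1)/\gamma$ (read as $E_1$ when $\gamma=0$), while regular variation of $a$ with index $\gamma$ makes the second factor converge to $T_n^{-\gamma}\to\Gamma_2^{-\gamma}$. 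Hence $(X_{(1)}-X_{(2)})/a(n)$ converges in distribution to $\Gamma_2^{-\gamma}(e^{\gamma E_1}-1)/\gamma$.

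The main obstacle is moment convergence: weak convergence alone does not give convergence of second moments. Theorem 5.3.1 in \cite{dHaFei06} supplies the Potter-type bounds on $U$ needed to control the tails of the squared ratio uniformly in $n$, yielding uniform integrability. The limit $\lim_n \EXP[(X_{(1)}-X_{(2)})^2]/a(n)^2$ then equals
\begin{displaymath}
\EXP[\Gamma_2^{-2\gamma}]\cdot\gamma^{-2}\EXP[(e^{\gamma E_1}-1)^2]=\Gamma(2-2\gamma)\cdot\frac{2}{(1-\gamma)(1-2\gamma)},
\end{displaymath}
using $\EXP[\Gamma_2^{-2\gamma}]=\int_0^\infty t^{1-2\gamma}e^{-t}\,\mathrm{d}t=\Gamma(2(1-\gamma))$ together with $\EXP[(e^{\gamma E_1}-1)^2]=\frac{1}{1-2\gamma}-\frac{2}{1-\gamma}+1=\frac{2\gamma^2}{(1-\gamma)(1-2\gamma)}$, both finite for $\gamma<1/2$.

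For $\var(X_{(1)})/a(n)^2$, the definition of $\mathsf{MDA}(\gamma)$ yields that $(X_{(1)}-F^\leftarrow(1-1/n))/a(n)$ converges in distribution to $W_\gamma=(\Gamma_1^{-\gamma}-1)/\gamma$ with $\Gamma_1\sim\exp(1)$, which has the prescribed extreme-value CDF $\exp(-(1+\gamma x)^{-1/\gamma})$. Uniform integrability of the square is again supplied by Theorem 5.3.1 in \cite{dHaFei06}, so $\var(X_{(1)})/a(n)^2\to\var(W_\gamma)$. From $\EXP[\Gamma_1^{-\gamma}]=\Gamma(1-\gamma)$ and $\EXP[\Gamma_1^{-2\gamma}]=\Gamma(1-2\gamma)$ one obtains $\var(W_\gamma)=\gamma^{-2}\bigl(\Gamma(1-2\gamma)-\Gamma(1-\gamma)^2\bigr)$, and a Taylor expansion of $\Gamma(1-\cdot)$ around $0$ recovers $\pi^2/6$ at $\gamma=0$.
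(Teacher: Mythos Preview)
Your proposal is correct and follows the same approach that the paper sketches: invoke Theorem~2.1.1 of \cite{dHaFei06} (extended regular variation of $U$) to identify the weak limits, and Theorem~5.3.1 of \cite{dHaFei06} to pass from weak convergence to convergence of second moments. The paper does not give more than that one-line indication, so your write-up is a faithful and more detailed rendering of exactly what the authors intend; the explicit identification of the spacing limit as $\Gamma_2^{-\gamma}(e^{\gamma E_1}-1)/\gamma$ via R\'enyi and the ensuing Gamma-function calculus are the ``simple calculus'' the paper alludes to.
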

The asymptotic ratio between the Efron-Stein upper bound and the variance of 
$X_{(1)}$  converges toward a limit that depends only on $\gamma$ (for $\gamma=0$ this limit is $12/\pi^2 \approx 1.21$). 
When the tail index $\gamma<0$, the asymptotic ratio degrades as $\gamma \rightarrow -\infty,$ it scales like~$-4\gamma.$

\section{Order statistics of Gaussian samples}
\label{sec:gaussian-samples}
We now turn to the Gaussian setting. We will establish Bernstein inequalities for 
order statistics of absolute values of independent Gaussian random variables. 

A real-valued random variable $X$ is
said to be {\sl sub-gamma on the right tail with variance factor $v$ and
scale parameter $c$} if
\[
\log \esp e^{\lambda (X-\esp X)}\leq\frac{\lambda^2v}{2(1-c\lambda)  }
\text{ for every }\lambda\quad \mbox{such that} \quad 0<\lambda<1/c~.
\] Such a random variable satisfies a so-called Bernstein-inequality: for $t>0,$\\
\begin{math}
  \mathbb{P} \left\{ X\geq \esp X +\sqrt{2vt } + ct\right\}\leq \exp\left( -t\right)  . 
\end{math} A real-valued  random variable $X$ is
said to be {\sl sub-gamma on the left tail with variance factor $v$ and
scale parameter $c$}, if $-X$  is sub-gamma on the right tail with variance factor $v$ and
scale parameter $c$. A Gamma random variable with shape parameter $p$ and scale parameter $c$
(expectation $pc$ and variance $pc^2$) is sub-gamma on the right tail with variance factor $pc^2$ and scale factor $c$
while it is sub-gamma on the left-tail with variance factor $pc^2$ and scale factor $0$. The Gumbel distribution 
(with distribution function $\exp(-\exp(-x))$ is sub-gamma on the right-tail with variance factor $\pi^2/6$ and scale factor $1$,
it is sub-gamma on the left-tail with scale factor $0$ (note that this statement is not sharp, see Lemma \ref{lem:stupid} below). 

Order statistics of Gaussian samples provide an interesting playground for assessing Theorem~\ref{prp:var:hazard:dec}. 
Let $\Phi$ and $\phi$ denote respectively the standard Gaussian distribution  function and density. 
Throughout this section, let $\widetilde{U}\colon ]1,\infty) \rightarrow [0,\infty) $ be defined by 
\begin{math}
\widetilde{U}(t) = \Phi^\leftarrow(1-1/(2t)) \, , 
\end{math} $\widetilde{U}(t)$ is the $1-1/t$  quantile of the distribution  of the absolute value of 
a standard Gaussian random variable, or the $1-1/(2t)$ quantile of the Gaussian distribution.

\begin{prp} Absolute values of Gaussians have non-decreasing hazard rate : 
  \label{prp:absgaussian}\\
i) $\widetilde{U} \circ \exp$ is concave; \\
ii) For $y>0$, 
  \begin{math}
\phi(\widetilde{U}(\exp(y)))/\overline{\Phi}(\widetilde{U}(\exp(y)))   \ge {\sqrt{\kappa_1(y + \log 2)}} 
  \end{math}
 where $\kappa_1\geq 1/2.$\\
iii) For $t\geq 3,$
\begin{displaymath}
\sqrt{2 \log(2t)-\log\log(2t) -\log (4\pi) }\leq   \widetilde{U}(t) \leq\sqrt{2 \log(2t)-\log\log(2t) -\log \pi } \pt
\end{displaymath}
\end{prp}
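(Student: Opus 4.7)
The plan is to treat the three parts separately. For (i), observe that $|X_i|$ has density $2\phi$ and survival function $2\overline{\Phi}$ on $[0,\infty)$, so its hazard rate is the classical Gaussian hazard rate $h = \phi/\overline{\Phi}$. Since $h'(x) = h(x)(h(x)-x)$ and the elementary Mills inequality gives $h(x) \geq x$ for $x \geq 0$, we obtain $h' \geq 0$; Proposition~\ref{prp:incr:hazard} then yields the concavity of $\widetilde{U} \circ \exp$.

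For (ii), I would reparametrize via $x = \widetilde{U}(e^y)$, so that $\overline{\Phi}(x) = e^{-y}/2$ and $y + \log 2 = -\log\overline{\Phi}(x)$; the target inequality becomes $h(x)^2 \geq -\kappa_1 \log\overline{\Phi}(x)$ on $x \geq 0$. Setting $F(x) = h(x)^2 + \kappa_1 \log\overline{\Phi}(x)$, a direct computation yields $F'(x) = h(x)(2h'(x) - \kappa_1)$. The key input is the sharp lower bound $h(x) \geq (x + \sqrt{x^2 + 8/\pi})/2$, equivalent to $h(x)(h(x)-x) \geq 2/\pi$, and hence $h'(x) \geq 2/\pi$. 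Therefore $F'(x) \geq 0$ whenever $\kappa_1 \leq 4/\pi$; combined with $F(0) = 2/\pi - \kappa_1 \log 2 \geq 0$ (valid for $\kappa_1 \leq 2/(\pi\log 2)$), this gives $F \geq 0$ on $[0,\infty)$. Both constraints accommodate $\kappa_1 = 1/2$, delivering the stated bound.

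For (iii), set $L = \log(2t)$ and $x = \widetilde{U}(t)$, so that $\overline{\Phi}(x) = e^{-L}$. The two bounds amount to controlling $\overline{\Phi}$ at prescribed levels. For the upper bound $\widetilde{U}(t) \leq \sqrt{2L - \log L - \log\pi}$, I would apply the Mills upper bound $\overline{\Phi}(y) \leq \phi(y)/y$ at $y^2 = 2L - \log L - \log\pi$; after elementary simplification, $\overline{\Phi}(y) \leq e^{-L}\sqrt{L/(2y^2)}$, which is at most $e^{-L}$ provided $y^2 \geq L/2$, and this reduces to $3L/2 \geq \log(\pi L)$, valid for every $L > 0$ (the minimum of $3L/2 - \log(\pi L)$ is positive at $L = 2/3$). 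For the lower bound $\widetilde{U}(t) \geq \sqrt{2L - \log L - \log(4\pi)}$, I would invoke the sharper Mills bound $\overline{\Phi}(y) \geq 2\phi(y)/(y + \sqrt{y^2 + 4})$ (equivalently $h(y) \leq (y + \sqrt{y^2+4})/2$). At $y^2 = 2L - \log L - \log(4\pi)$, substitution converts $\overline{\Phi}(y) \geq e^{-L}$ into $y + \sqrt{y^2+4} \leq 2\sqrt{2L}$, and squaring twice reduces this to $\log(4\pi L) \geq 2 - 1/(2L)$, which is verified for $L \geq \log 6$, i.e.\ $t \geq 3$.

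The main obstacle lies in the lower bound of (iii): the naive Mills inequality $\overline{\Phi}(y) \geq y\phi(y)/(y^2+1)$ is too loose near the threshold $t=3$, so a Komatsu-type sharpening is essential. The threshold $t \geq 3$ is then natural, ensuring simultaneously that $2L - \log L - \log(4\pi) > 0$ with comfortable margin and that the sharper Mills estimate suffices.
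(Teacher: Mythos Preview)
Your argument is correct, and for part (i) it coincides with the paper's. For parts (ii) and (iii), however, you take a genuinely different route. The paper packages everything through the Gaussian isoperimetric function $p\mapsto \phi\circ\Phi^\leftarrow(p)$: it asserts the two-sided estimate
\[
p\sqrt{\tfrac{1}{2}\log\tfrac{1}{p}}\;\le\;\phi\circ\Phi^\leftarrow(p)\;\le\;p\sqrt{2\log\tfrac{1}{p}}
\qquad (0<p\le 1/2),
\]
deriving the lower side from the basic Mills inequality $\phi(x)\ge x\,\overline\Phi(x)$ and citing \cite{tillich2001dii} for the upper side; (ii) is then the lower side rewritten, and both inequalities in (iii) follow by solving $\phi(x)=\tfrac{1}{\sqrt{2\pi}}e^{-x^2/2}$ against these bounds. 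Your approach instead works directly with Mills-ratio refinements: for (ii) you exploit $h'(x)\ge h'(0)=2/\pi$ (equivalently $h(x)\ge\tfrac12(x+\sqrt{x^2+8/\pi})$, which amounts to the convexity of $h$ on $[0,\infty)$) to run a monotonicity argument on $F(x)=h(x)^2+\kappa_1\log\overline\Phi(x)$; for (iii) you use the upper Mills bound $\overline\Phi\le\phi/x$ and the Birnbaum--Komatsu lower bound $\overline\Phi(y)\ge 2\phi(y)/(y+\sqrt{y^2+4})$ (equivalently $h'\le 1$). What you gain is a self-contained derivation that makes the role of the threshold $t\ge 3$ transparent and yields the explicit range $\kappa_1\le 2/(\pi\log 2)$ in (ii); what the paper gains is brevity and a unified viewpoint, since the same pair of isoperimetric bounds drives both (ii) and (iii). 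One small caveat: the inequality $h'(x)\ge 2/\pi$ that you invoke as ``the sharp lower bound'' is true but less elementary than the plain Mills inequality the paper uses---it is essentially Sampford's convexity of the hazard rate, so a citation would be appropriate.
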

\begin{proof}
i) As    $(\widetilde{U}\circ\exp)'(t)=\overline{\Phi}(\widetilde{U}(e^t))/\phi(\widetilde{U}(e^t)) $ it suffices to check that 
the standard Gaussian distribution has non-decreasing hazard rate  on $[0,\infty).$ Let $h=\phi/\overline{\Phi}$, 
by elementary calculus, for $x>0$, 
\begin{math}
h'(x)  = \left( \phi(x) - x\overline{\Phi}(x) \right) {\phi(x)}/{\overline{\Phi}^2(x)} \geq 0
\end{math}
where the last inequality is a well known fact. 

ii) For  $\kappa_1=1/2$, for $p\in (0,1/2]$, the fact that 
\begin{math}
 p\sqrt{\kappa_1\log 1/p}   \leq \phi\circ \Phi^{\leftarrow}(p) 
\end{math}
follows from $\phi(x) - x\overline{\Phi}(x)  \geq 0$ for $x>0$. Hence, 
\begin{displaymath}
 \frac{\overline{\Phi} ({\Phi}^\leftarrow(1-e^{-y}/2))}{\phi({\Phi}^\leftarrow(1-e^{-y}/2))}
= \frac{e^{-y}/2}{\phi ( \Phi^{\leftarrow}(e^{-y}/2))} 
 \le  \frac{1}{ \sqrt{\kappa_1(\log 2 + y)}} \pt
\end{displaymath}

iii) The first inequality can be deduced from $\phi\circ \Phi^{\leftarrow}(p) \leq p\sqrt{2\log 1/p}  \, , $
for $p\in (0,1/2)$ \cite{tillich2001dii}, the second from \begin{math}
 p\sqrt{\kappa_1\log 1/p}   \leq \phi\circ \Phi^{\leftarrow}(p) \pt 
\end{math} 
\end{proof}

The next proposition shows that when used in a proper way, Efron-Stein inequalities may provide 
seamless bounds on extreme, intermediate and central  order statistics of Gaussian samples.

\begin{prp}
\label{prp:var:gaussian}
Let $n\geq 3$, let $X_{(1)}\geq \ldots \geq X_{(n)}$ be the order statistics of absolute values of a  standard Gaussian sample, 
\begin{displaymath}
\text{For $1 \leq k\leq n/2$,}\quad \var [X_{(k)}] \le 
 \frac{1}{k\log 2}  \frac{8}{\log\tfrac{2n}{k} -\log (1+ \tfrac{4}{k} \log \log \tfrac{2n}{k})} 
\pt
\end{displaymath}
\end{prp}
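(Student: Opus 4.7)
The plan is to chain together Theorem~\ref{prp:var:hazard:dec}, Proposition~\ref{prp:absgaussian}(ii) and R\'enyi's representation (Theorem~\ref{thm:renyi}), so as to reduce everything to a clean question about $Y_{(k+1)}$, the $(k+1)$-th order statistic of a standard exponential sample, and then to use lower-tail concentration of $Y_{(k+1)}$ around its mean $\mu_k=\sum_{i=k+1}^n 1/i$.

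\smallskip

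\textbf{Step 1: Reduction to an exponential order statistic.} Since absolute values of Gaussians have non-decreasing hazard rate (Proposition~\ref{prp:absgaussian}(i)), Theorem~\ref{prp:var:hazard:dec} applies. By Proposition~\ref{prp:absgaussian}(ii) with $\kappa_1=1/2$, we have $1/h(\widetilde U(e^y))^2\le 2/(y+\log 2)$. Combined with R\'enyi ($X_{(k+1)}\stackrel{d}{=}\widetilde U(\exp(Y_{(k+1)}))$), this yields
\[
\var\bigl[X_{(k)}\bigr]\le\frac{2}{k}\,\EXP\!\left[\frac{1}{h(X_{(k+1)})^2}\right]\le\frac{4}{k}\,\EXP\!\left[\frac{1}{Y_{(k+1)}+\log 2}\right].
\]

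\smallskip

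\textbf{Step 2: Deviation split.} Since $y\mapsto 1/(y+\log 2)$ is decreasing and bounded by $1/\log 2$ on $[0,\infty)$, for any $t\ge 0$
\[
\EXP\!\left[\frac{1}{Y_{(k+1)}+\log 2}\right]\le\frac{1}{t+\log 2}+\frac{\mathbb{P}\{Y_{(k+1)}<t\}}{\log 2}.
\]
I pick $t=\mu_k-s$ for a parameter $s>0$ tuned below. The lower-tail concentration of $Y_{(k+1)}=\sum_{i=k+1}^n E_i/i$ is sub-Gaussian: each $-E_i/i+1/i$ has log-MGF bounded by $\lambda^2/(2i^2)$, so $\sum_{i=k+1}^n 1/i^2\le 1/k$ is a valid variance factor, and $\mathbb{P}\{Y_{(k+1)}\le\mu_k-s\}\le \exp(-ks^2/2)$.

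\smallskip

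\textbf{Step 3: Controlling the mean and picking $s$.} From $\mu_k\ge\log\frac{n+1}{k+1}\ge\log(n/k)-1/k$, we get $\mu_k+\log 2\ge\log(2n/k)-1/k$. I then choose $s$ so that $ks^2/2=4\log\log(2n/k)$, i.e.\ $s=\sqrt{(8/k)\log\log(2n/k)}$; this makes the tail contribution $\exp(-ks^2/2)/\log 2 = 1/(\log 2\cdot\log^4(2n/k))$, which is dominated by the first term for the whole range $1\le k\le n/2$ (since $\log(2n/k)\ge\log 4$).

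\smallskip

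\textbf{Step 4: Algebraic assembly.} It remains to show that
\[
\mu_k+\log 2-s\ \ge\ \log 2\cdot\Bigl(\log(2n/k)-\log\bigl(1+\tfrac{4}{k}\log\log(2n/k)\bigr)\Bigr),
\]
so that both the deterministic term and the (much smaller) tail term are each bounded by $1/\bigl(\log 2\cdot(\log(2n/k)-\log(1+\delta))\bigr)$ with $\delta=\frac{4}{k}\log\log(2n/k)$. The two together give the factor $2$ in the numerator of the target, and multiplying by $4/k$ from Step~1 produces the stated constant~$8$.

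\smallskip

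\textbf{Main obstacle.} The real work is the algebraic inequality in Step~4: matching the sub-Gaussian correction $\sqrt{(8/k)\log\log(2n/k)}+1/k$ against the target correction $\log\bigl(1+\tfrac{4}{k}\log\log(2n/k)\bigr)$, which requires verifying the inequality in the two regimes $k\lesssim\log\log(2n/k)$ (where $\log(1+\delta)$ is essentially $\log\log\log(2n/k)$) and $k\gg\log\log(2n/k)$ (where $\log(1+\delta)\sim\delta$). In each regime the inequality reduces to an elementary comparison after using $(1-\log 2)\log(2n/k)\ge\log 4\cdot(1-\log 2)$; the $1/\log 2$ slack is exactly what absorbs the extra summands and yields the target form.
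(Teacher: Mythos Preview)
Your Steps 1--2 match the paper's argument exactly: reduce to $\esp[1/(\log 2+Y_{(k+1)})]$ via Theorem~\ref{prp:var:hazard:dec} and Proposition~\ref{prp:absgaussian}(ii), then split at a threshold. The divergence is in the tail bound and the choice of threshold, and this is where your proposal breaks.

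You use the sub-Gaussian lower-tail estimate $\mathbb{P}\{Y_{(k+1)}\le\mu_k-s\}\le\exp(-ks^2/2)$ and take $s=\sqrt{(8/k)\log\log(2n/k)}$. For the split in Step~2 to be meaningful you need $t=\mu_k-s>-\log 2$, but already at $n=3$, $k=1$ one has $\mu_1=5/6$ and $s=\sqrt{8\log\log 6}\approx 2.16$, so $\mu_k-s+\log 2\approx -0.63<0$: the ``deterministic term'' $1/(t+\log 2)$ is negative and the whole decomposition collapses. Consequently the algebraic inequality you call ``the main obstacle'' in Step~4 is not just unproved, it is false in the regime $n$ small, $k=1$ (left-hand side negative, right-hand side positive). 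The sub-Gaussian bound is simply too weak here: its correction scales like $\sqrt{\delta}$ where $\delta=\tfrac{4}{k}\log\log(2n/k)$, and $\sqrt{\delta}$ can swamp $\mu_k+\log 2$.

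The paper instead applies Lemma~\ref{lem:stupid}, a Bennett/binomial-type bound giving $\mathbb{P}\{Y_{(k+1)}\le\log(n/k)-z\}\le\exp(-k(e^z-1)/4)$. The choice $z=\log(1+\delta)$ then makes the tail probability exactly $1/\log(2n/k)$, and the correction $z$ is \emph{precisely} the term appearing in the statement---no algebraic matching is needed. (When $z\ge\log(n/k)$ the tail probability is trivially~$0$, so the split remains valid as long as $z<\log(2n/k)$, which holds throughout.) The two summands are then each at most $\tfrac{4}{k\log 2}\cdot\frac{1}{\log(2n/k)-\log(1+\delta)}$, and adding gives the factor~$8$. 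The key point you are missing is that Lemma~\ref{lem:stupid} delivers a logarithmic rather than square-root deviation, which is what the shape of the target bound demands.
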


By Theorem 5.3.1 from \cite{dHaFei06}, $\lim_n 2\log n \var[X_{(1)}]=\pi^2/6,$
while the above described upper bound on $\var[X_{(1)}]$ is equivalent to $(8/\log 2)/\log n $. 
If $\lim_n k_n =\infty$ while $\lim_n k_n/n=0$, Smirnov's lemma \cite{dHaFei06} implies that  
$\lim_n k(\widetilde{U}(n/k))^2 \var[X_{(k)}]=1.$ For the asymptotically normal 
median of absolute values,
 $\lim_n  (4 \phi(\widetilde{U}(2))^2n)\var [X_{(n/2)}]=1$ \cite{vandervaart:1998}. 
Again, the  bound in Proposition~\ref{prp:var:gaussian} has the correct order of magnitude.

\begin{lem}\label{lem:stupid}
  Let $Y_{(k)}$ be the $k^{\text{th}}$ order statistics of a sample of
  $n$ independent exponential random variables, let $\log 2<z< \log (n/k),$ then 
  \begin{displaymath}
    \mathbb{P}\left\{ Y_{(k+1)} \leq \log(n/k)-z \right\} \leq \exp\left( - \tfrac{k(e^z-1)}{4 }\right)\pt 
  \end{displaymath}
\end{lem}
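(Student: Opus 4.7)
The plan is to translate the event $\{Y_{(k+1)} \leq t\}$ into a Binomial lower-tail deviation and then apply a Chernoff bound. Let $E_1,\ldots,E_n$ denote i.i.d.\ standard exponentials whose decreasing rearrangement is $Y_{(1)}\geq \cdots \geq Y_{(n)}$. For any $t>0$ the event $\{Y_{(k+1)} \leq t\}$ is exactly the event that at most $k$ of the $E_i$'s exceed $t$, so writing $N_t = \#\{i\leq n : E_i > t\}$ we have $N_t \sim \mathrm{Bin}(n, e^{-t})$. Choosing $t = \log(n/k) - z$ yields $e^{-t} = (k/n)e^z$, which lies in $(0,1)$ precisely because $z<\log(n/k)$, so $\mu := \esp N_t = ke^z$. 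The target probability then becomes $\proba\{N_t \leq k\} = \proba\{N_t \leq (1-\delta)\mu\}$ with $\delta = 1 - e^{-z}\in (0,1)$.

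I would then invoke the standard multiplicative Chernoff bound $\proba\{N_t \leq (1-\delta)\mu\} \leq \exp(-\mu\delta^2/2)$, which gives the exponent
\[
\mu\delta^2/2 \;=\; ke^z(1-e^{-z})^2/2 \;=\; \tfrac{k}{2}(e^z-1)(1-e^{-z}).
\]
The hypothesis $z > \log 2$ forces $1 - e^{-z} \geq 1/2$, so this exponent is at least $k(e^z-1)/4$, which is exactly the bound claimed.

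There is essentially no hard step: once the binomial reformulation is in hand, the rest is a routine Chernoff calculation, with the two hypotheses on $z$ playing purely bookkeeping roles -- the upper bound $z<\log(n/k)$ keeps $e^{-t}$ a valid probability, while the lower bound $z>\log 2$ yields the clean constant $1/4$ rather than the more awkward $(e^z+e^{-z}-2)/2$ that the Chernoff exponent produces verbatim. If one wanted to avoid citing a black-box Chernoff estimate, the same conclusion follows by writing $\proba\{N_t \leq k\} \leq e^{sk}\esp[e^{-sN_t}] = e^{sk}(1 - e^{-t}(1-e^{-s}))^n$ and optimizing in $s>0$; the optimal $s$ is $\log(e^z)$ and one recovers the same exponent $\mu\delta^2/2$ after the elementary inequality $\log(1-u)\leq -u-u^2/2$ for $u\in(0,1)$.
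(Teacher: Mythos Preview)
Your proof is correct and follows essentially the same route as the paper: both rewrite the event as the lower tail $\{\mathrm{Bin}(n,ke^z/n)\le k\}$ and apply a sub-Gaussian/Chernoff bound with variance proxy $\mu=ke^z$, obtaining the exponent $k(e^z-1)^2/(2e^z)=\tfrac{k}{2}(e^z-1)(1-e^{-z})$, which the hypothesis $z>\log 2$ then converts to $k(e^z-1)/4$. The only difference is nomenclature---the paper phrases the Chernoff step as ``sub-gamma on the left tail with variance factor $ke^z$ and scale factor $0$''---and your closing parenthetical about the exact optimizer $s$ is slightly loose (the true optimizer depends on $n,k$, not just $z$), but this does not affect the main argument.
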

\begin{proof}
  \begin{eqnarray*}
     \mathbb{P}\left\{ Y_{(k+1)} \leq \log(n/k)-z \right\} &= & \sum_{j=0}^k \binom{n}{j} \left(1-\frac{ke^{z}}{n}\right)^{n-j} \left(\frac{ke^{z}}{n} \right)^j \\
& \leq & \exp\left(-\frac{k (e^{z}-1)^2}{2  e^{z} }\right)  
  \end{eqnarray*}
since the right-hand-side of the first line is the probability that a binomial random variable with parameters $n$ and  $\frac{ke^{z}}{n}$ is less than $k$, 
which is  sub-gamma on the left-tail with variance factor less than $ke^z$ and scale factor $0$.
\end{proof}
\begin{proof}
By Propositions \ref{prp:var:hazard:dec} and \ref{prp:absgaussian}, letting $\kappa_1=1/2$
\begin{eqnarray*}
\var \left( X_{(k)} \right)& \le& \frac{2}{k}\esp \left[\frac{2}{ \log 2 + Y_{(k+1)}}\right] \\
&=& \frac{1}{\log2}\frac{4}{k} \mathbb{P} \Big \{ Y_{(k+1)} \le \log(n/k) -z \Big \}  + \frac{4}{k}  \frac{1}{\log \frac{n}{k}- z+  \log 2} \\
& \leq & \frac{4}{k \log 2 } \frac{1}{ \log\tfrac{2n}{k}} + \frac{4}{k}  \frac{1}{\log\tfrac{2n}{k} -\log (1+ \tfrac{4}{k} \log \log \tfrac{2n}{k})} 
\, , 
\end{eqnarray*}
where we used Lemma \ref{lem:stupid} with
 $z= \log (1+\frac{4}{k}\log \log \frac{2n}{k}).$
\end{proof}

Our next goal is to establish that the order statistics  of absolute values of independent Gaussian random variables
are sub-gamma on the right-tail with variance factor close to the Efron-Stein estimates of variance derived in Proposition~\ref{prp:absgaussian}
and scale factor not larger than the square root of the Efron-Stein  estimate of variance.

Before describing the consequences of Theorem \ref{prp:var:hazard:dec}, it is interesting to look at what 
can be obtained from R\'enyi's representation and exponential inequalities for sums of Gamma-distributed random variables. 
\begin{prp}\label{prp:cheap:tight}
  Let $X_{(1)}$ be the maximum of the absolute values of $n$ independent
  standard Gaussian  random variables, and let
  $\widetilde{U}(s)=\Phi^\leftarrow(1-1/(2s))$ for $s\geq 1.$ For $t>0$, 
  \begin{displaymath}
    \proba\left\{ X_{(1)} -\EXP X_{(1)} \geq t/(3\widetilde{U}(n)) +\sqrt{t}/\widetilde{U}(n) +\delta_n\right\} \leq \exp\left( -t\right) \, ,
  \end{displaymath}
where $\delta_n>0$ and 
  \begin{math}
    \lim_n (\widetilde{U}(n))^{3}\delta_n = \tfrac{\pi^2}{12}\pt 
  \end{math}
\end{prp}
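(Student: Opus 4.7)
The plan is to combine R\'enyi's representation with a Bernstein-type tail bound for a sum of independent exponentially (hence Gamma) distributed random variables, and then to identify $\delta_n$ through a second-order expansion.

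By Theorem~\ref{thm:renyi} applied to $F(x) = 2\Phi(x) - 1$, we have $X_{(1)} \dis \widetilde U(\exp(Y_{(1)}))$, where $Y_{(1)} = \sum_{k=1}^n E_k/k$ and $E_1,\ldots,E_n$ are i.i.d.\ standard exponentials. Decompose $Y_{(1)} = H_n + S_n$ with $H_n = \sum_{k=1}^n 1/k$ and $S_n = \sum_{k=1}^n (E_k-1)/k$. I first invoke concavity of $\widetilde U \circ \exp$ (Proposition~\ref{prp:absgaussian}(i)) together with the Mills-type lower bound $\phi(\widetilde U(n))/\overline\Phi(\widetilde U(n)) \geq \widetilde U(n)$ (consequence of $x\overline\Phi(x)\leq\phi(x)$) to obtain the tangent-line bound at $\log n$,
\[
X_{(1)} \leq \widetilde U(n) + (Y_{(1)}-\log n)/\widetilde U(n) = \widetilde U(n) + (H_n-\log n)/\widetilde U(n) + S_n/\widetilde U(n)\pt
\]

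Next, I apply the classical Bernstein inequality to the centered sum $S_n$: since $-\log(1-u) - u \leq u^2/(2(1-u))$ on $[0,1)$, termwise summation yields
\[
\log \esp \exp(\lambda S_n/\widetilde U(n)) \leq \frac{v_n \lambda^2/\widetilde U(n)^2}{2(1-\lambda/\widetilde U(n))}, \qquad 0 \leq \lambda < \widetilde U(n),
\]
with $v_n = \sum_{k=1}^n 1/k^2 \leq \pi^2/6$. The Chernoff argument then provides a sub-gamma tail bound for $S_n/\widetilde U(n)$, which, combined with the concavity inequality above, controls $X_{(1)}$ around the deterministic reference point $\widetilde U(n)+(H_n-\log n)/\widetilde U(n)$.

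To identify $\delta_n$, I compute the gap between this reference and $\esp X_{(1)}$. A second-order Taylor expansion of $\widetilde U \circ \exp$ at $\log n$, using the estimate $(\widetilde U\circ\exp)''(\log n) = -\tilde h'(\widetilde U(n))/\tilde h(\widetilde U(n))^3 \sim -1/\widetilde U(n)^3$ (obtained from the Mills expansion $\tilde h(x) = x + 1/x + O(1/x^3)$) together with $\var(Y_{(1)}) = v_n \to \pi^2/6$, yields
\[
\esp X_{(1)} = \widetilde U(n) + (H_n-\log n)/\widetilde U(n) - (\pi^2/12 + o(1))/\widetilde U(n)^3\pt
\]
Setting $\delta_n := \widetilde U(n) + (H_n-\log n)/\widetilde U(n) - \esp X_{(1)}$, I get $\delta_n > 0$ and $\widetilde U(n)^3 \delta_n \to \pi^2/12$, matching the asymptotic claim.

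The main obstacle is matching the exact constants in the tail bound. A direct combination of the linear concavity with Bernstein for $S_n$ naively produces coefficients $\sqrt{\pi^2 t/3}/\widetilde U(n)$ and $t/\widetilde U(n)$, while the statement demands the sharper $\sqrt t/\widetilde U(n)$ and $t/(3\widetilde U(n))$. Recovering these requires working with the quadratic concavity bound $X_{(1)}^2 \leq \widetilde U(n)^2 + 2(Y_{(1)}-\log n)_+$ (from integrating the differential inequality $\widetilde U \widetilde U' \cdot \exp \leq 1$ supplied by Mills) rather than its linearization, and re-expressing the resulting Gaussian-type tail $\exp(-s\widetilde U(n) - s^2/2)$ into the stated sub-gamma form while absorbing all lower-order slack into the $\widetilde U(n)^{-3}$ correction defining $\delta_n$. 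A secondary delicate point is making the Taylor estimate of $\esp X_{(1)}$ rigorous to order $\widetilde U(n)^{-3}$, which uses the quantitative form of the Mills asymptotics from Proposition~\ref{prp:absgaussian}(iii).
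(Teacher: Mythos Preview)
Your overall strategy---R\'enyi's representation, the concavity of $\widetilde U\circ\exp$, and a Bernstein bound for $Y_{(1)}=\sum_k E_k/k$---is exactly the paper's. The substantive discrepancy is the expansion point. The paper linearizes at $H_n=\mathbb E Y_{(1)}$, not at $\log n$: it uses
\[
\widetilde U(e^{Y_{(1)}})-\widetilde U(e^{H_n})\le \frac{Y_{(1)}-H_n}{\widetilde U(e^{H_n})}
\]
and sets $\delta_n=\widetilde U(e^{H_n})-\mathbb E X_{(1)}$. Centering at $H_n$ is precisely what makes $\widetilde U(n)^3\delta_n\to\pi^2/12$: the Gaussian second-order expansion reads $\widetilde U(n)^3\bigl(\widetilde U(e^x n)-\widetilde U(n)-x/\widetilde U(n)\bigr)\to -(x^2/2+x)$, with a term \emph{linear} in $x$. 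Taking expectation with $x=Y_{(1)}-H_n$ kills this linear term and leaves $\tfrac12\operatorname{Var}Y_{(1)}\to\pi^2/12$.

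With your expansion at $\log n$, the same computation produces
\[
\widetilde U(n)^3\delta_n \;\longrightarrow\; \tfrac12\mathbb E\bigl[(Y_{(1)}-\log n)^2\bigr]+\mathbb E[Y_{(1)}-\log n]
\;=\;\tfrac{\pi^2}{12}+\tfrac{\gamma^2}{2}+\gamma,
\]
where $\gamma=\lim_n(H_n-\log n)$ is Euler's constant; your Taylor step silently replaced $\mathbb E[(Y_{(1)}-\log n)^2]$ by $\operatorname{Var}Y_{(1)}$ and dropped the linear contribution. So the asymptotic you claim for your $\delta_n$ is not the one your definition actually gives.

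Regarding the constants in the tail bound: the paper does not pass through any quadratic inequality on $X_{(1)}^2$. It simply invokes the sub-gamma property of $Y_{(1)}-H_n$ (variance factor $\le\pi^2/6\le 2$, scale factor bounded by that of $E_1$), applies the linear tangent bound above, and then replaces $\widetilde U(e^{H_n})$ by $\widetilde U(n)$ using monotonicity and $H_n\ge\log n$. The workaround you sketch is unnecessary once you center at $H_n$.
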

This inequality looks  like what we are looking for:  ${\widetilde{U}(n)}(X_{(1)} -\EXP X_{(1)}) $ converges in distribution, but also in 
quadratic mean, or even according  to the  Orlicz norm defined by $x\mapsto \exp(|x|)-1$, 
toward a centered Gumbel distribution. The centered Gumbel distribution is sub-gamma on the right tail 
with variance factor $\pi^2/6$ and scale factor $1$,  we expect $X_{(1)}$ to satisfy  a Bernstein  inequality 
with variance factor of order $1/\widetilde{U}(n)^2$ and scale factor $1/\widetilde{U}(n)$. Up to the shift $\delta_n$, this is the content of the proposition. 
Note that the shift is asymptotically negligible with respect to the typical order of magnitude of the fluctuations.  The constants in the next proposition are not sharp enough  to make the next proposition competitive with Proposition~\ref{prp:cheap:tight}. Nevertheless it illustrates  that 
Proposition~\ref{prp:var:hazard:dec}  captures the correct  order of growth for the right-tail of Gaussian maxima. 

\begin{prp}
\label{prp:deviation:max:gaussian}
For  $n$ such that  the solution $v_n$ of equation 
$16/x+\log(1+2/x+4\log(4/x))=\log (2n)$ is smaller than 1, 
for all $0 \le \lambda <\frac{1}{\sqrt{v_n}}$, 
\begin{eqnarray*}
\log \esp e^{ \lambda(X_{(1)} - \esp X_{(1)})} \le \frac{v_n \lambda ^2 }{2(1-\sqrt{v_n}\lambda) } \pt
\end{eqnarray*}
For all $t >0$, 
\begin{displaymath}
\mathbb{P} \left \{ X_{(1)} - \esp X_{(1)} >  \sqrt{v_n} (t+\sqrt{2t}) \right \} \leq e^{-t}\pt
\end{displaymath}
\end{prp}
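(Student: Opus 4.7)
The strategy is to apply Theorem \ref{prp:var:hazard:dec} at $k=1$, which is legitimate because the absolute value of a standard Gaussian has non-decreasing hazard rate by Proposition \ref{prp:absgaussian}(i). This reduces the task to showing that
\begin{equation*}
\frac{\lambda}{2} \esp\left[\Delta_1 \left(e^{\lambda \Delta_1}-1\right)\right] \leq \frac{v_n \lambda^2}{2(1-\sqrt{v_n}\lambda)}
\end{equation*}
over the stated range of $\lambda$, from which the Bernstein-type tail follows by a standard Chernoff argument.

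The first step is to control the spacing $\Delta_1 = X_{(1)}-X_{(2)}$. Using R\'enyi's representation (Theorem \ref{thm:renyi}) and the concavity of $\widetilde{U}\circ\exp$ (Propositions \ref{prp:incr:hazard} and \ref{prp:absgaussian}(i)), one has $\Delta_1 \leq E_1/h(X_{(2)})$, where $E_1 = Y_{(1)}-Y_{(2)}$ is a standard exponential independent of $Y_{(2)}$. The hazard-rate lower bound of Proposition \ref{prp:absgaussian}(ii) with $\kappa_1 = 1/2$ then yields the workable bound $\Delta_1 \leq E_1\sqrt{2/(Y_{(2)}+\log 2)}$.

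The second step is a thresholding argument. Introduce the event $A = \{Y_{(2)} \geq \log n - z\}$ for a $z$ to be tuned. On $A$, $\Delta_1 \leq E_1 M$ with $M = \sqrt{2/(\log(2n)-z)}$; since $\IND_A$ depends only on $Y_{(2)}$ and is therefore independent of $E_1$, a direct computation of exponential moments of the standard exponential distribution gives
\begin{equation*}
\esp\left[E_1 M \left(e^{\lambda E_1 M}-1\right) \IND_A\right] \leq M \cdot \frac{\lambda M (2-\lambda M)}{(1-\lambda M)^2},
\end{equation*}
valid for $\lambda M < 1$. Applying the inequality $(1-u)^2 \geq 1-2u$ recasts this as a sub-gamma expression whose variance and scale factors are both proportional to $M^2$ and $M$ respectively. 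On $A^c$, Lemma \ref{lem:stupid} provides $\proba(A^c) \leq \exp(-(e^z-1)/4)$, which one combines with the almost-sure bound $\Delta_1 \leq E_1\sqrt{2/\log 2}$ (from $Y_{(2)}\geq 0$) and the independence of $E_1$ from $A^c$ to dominate the residual contribution. Choosing $z$ so that $16/v_n = \log(2n) - z$ and $e^z - 1 = 2/v_n + 4\log(4/v_n)$ reproduces the defining equation of $v_n$ and lines up both contributions in the announced sub-gamma form; the hypothesis $v_n < 1$ ensures that $z \in (\log 2, \log n)$ so that Lemma \ref{lem:stupid} applies, and that $\sqrt{v_n}\lambda < 1$ throughout.

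Once the sub-gamma MGF bound is established, the tail inequality is the standard Chernoff calculation: optimizing $\lambda t - v_n\lambda^2/(2(1-\sqrt{v_n}\lambda))$ over $\lambda \in (0, 1/\sqrt{v_n})$ yields $\proba\{X_{(1)} - \esp X_{(1)} \geq \sqrt{v_n}(t+\sqrt{2t})\} \leq e^{-t}$, the two summands corresponding to the Gaussian regime $\sqrt{2v_n t}$ and the Poissonian regime $\sqrt{v_n}\,t$. The main obstacle is the bookkeeping on $A^c$: there the hazard-rate lower bound degrades and the exponential moment is not naively finite, so the exponentially small probability $\exp(-(e^z-1)/4)$ furnished by Lemma \ref{lem:stupid} must be balanced against the worst-case growth of $e^{\lambda\Delta_1}$, and this balance is precisely what forces the non-trivial logarithmic second term in the definition of $v_n$.
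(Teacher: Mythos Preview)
Your overall strategy---invoke Theorem~\ref{prp:var:hazard:dec} at $k=1$ and split according to whether $Y_{(2)}$ exceeds a threshold---is exactly the paper's, and your treatment on the good event $A$ matches it. The gap is on $A^c$.

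On $A^c$ you bound $\Delta_1\le E_1 M_0$ with $M_0=\sqrt{2/\log 2}$ and then use independence of $E_1$ from $A^c$ to factor
\[
\esp\bigl[\Delta_1(e^{\lambda\Delta_1}-1)\,\IND_{A^c}\bigr]
\;\le\;
\esp\bigl[E_1 M_0(e^{\lambda E_1 M_0}-1)\bigr]\cdot\proba(A^c)\, .
\]
But the first factor is $+\infty$ as soon as $\lambda\ge 1/M_0=\sqrt{\log 2/2}\approx 0.589$, whereas the statement requires $\lambda$ to range over $[0,1/\sqrt{v_n})$ with $v_n$ small. No matter how small $\proba(A^c)$ is, it cannot be ``balanced'' against an infinite expectation; your last paragraph names the obstacle but does not remove it.

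The paper's remedy is to drop the hazard-rate upper bound on the bad event and exploit instead the monotonicity of $y\mapsto\esp[\Delta_1(e^{\lambda\Delta_1}-1)\mid Y_{(2)}=y]$: this conditional expectation is maximised at $Y_{(2)}=0$, where $\Delta_1$ has the law of $|Z|$ with $Z$ standard Gaussian, giving the all-$\lambda$-finite bound
\[
2\int_0^\infty \frac{e^{-x^2/2}}{\sqrt{2\pi}}\,x(e^{\lambda x}-1)\,\mathrm{d}x \;\le\; 2\lambda e^{\lambda^2/2}\, .
\]
To absorb the resulting $e^{\lambda^2/2}$, the paper makes the threshold $\lambda$-dependent, $\tau=\log n-\log(1+2\lambda^2+4\log(4/v_n))$, so that Lemma~\ref{lem:stupid} produces a compensating factor $e^{-\lambda^2/2}$ inside $\proba\{Y_{(2)}\le\tau\}$. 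Your $\lambda$-independent choice of $z$ (with $e^z-1=2/v_n+4\log(4/v_n)$) cannot generate this cancellation. The Gaussian computation at $Y_{(2)}=0$ and the $\lambda$-dependent threshold are the missing ingredients.
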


\begin{proof}
By Proposition \ref{prp:var:hazard:dec},
\begin{displaymath}
\log \esp e^{\lambda( X_{(1)} - \esp X_{(1)})} \le \frac{\lambda}{2} \esp \left[ \Delta \left( e^{\lambda \Delta}-1 \right) \right]
\end{displaymath}
where $\Delta = X_{(1)} - X_{(2)} \dis U(2e^{Y_{(2)} +E_1}) - U(2e^{Y_{(2)}})$, with $E_1$ is exponentially distributed and independent
of $Y_{(2)}$ which is distributed like the $2^{\text{nd}}$ largest order statistics of an exponential sample.\\
On the one hand, the conditional expectation 
\begin{displaymath}
\esp \left[ \left(U(2e^{E_1+ Y_{(2)}}) - U(2e^{Y_{(2)}})\right) \left(e^{\lambda (U(2e^{E_1+Y_{(2)}})-U(2e^{Y_{(2)}}))}-1\right) \vert Y_{(2)} \right]
\end{displaymath}
is a non-increasing function of $Y_{(2)}$. The maximum is achieved for $Y_{(2)}=0$, and it is equal to :
\begin{displaymath}
2 \int_{0}^{\infty} \frac{e^{-x^2/2}}{\sqrt{2 \pi}} x(e^{\lambda x}-1) \mathrm{d}x \le 2 \lambda e^{\frac{\lambda^2}{2}} \pt
\end{displaymath}
On the other hand, by Proposition \ref{prp:absgaussian}, 
\begin{displaymath}
U(2e^{E_1+Y_{(2)}}) - U(2e^{Y_{(2)}}) \le \frac{\sqrt{2} E_1}{ \sqrt{  (\log 2 + Y_{(2)})}} \pt
\end{displaymath}
Meanwhile,   for $0 \le \mu < 1/2$, 
\begin{displaymath}
\int_{0}^{\infty} \mu x (e^{\mu x}-1)e^{-x} \mathrm{d}x = \frac{\mu^2(2-\mu)}{(1-\mu)^2} \le \frac{2 \mu^2}{1-2\mu} \pt
\end{displaymath}
Hence,
\begin{displaymath}
 \begin{split}
   { \lambda \esp \left[\left(  U(2 e^{E_1+Y_{(2)}})- U(2e^{Y_{(2)}}) \right)
  \left(e^{\lambda(U(2 e^{E_1+Y_{(2)}})- U(2e^{Y_{(2)}}))}-1\right) \mid Y_{(2)}\right]}\qquad \qquad \qquad\\
  \leq 
 \frac{4\lambda^2}{\log 2+Y_{(2)}} \frac{1}{1-\tfrac{2\sqrt{2}\lambda}{\sqrt{\log 2+Y_{(2)}}}}\pt
 \end{split}
\end{displaymath}
 Letting $\tau= \log n - \log (1+2\lambda^2+4\log(4/v_n)),$
 \begin{eqnarray*}
 \log \esp \left[ e^{\lambda(X_{(1)}-\esp X_{(1)})} \right] &\le& \underbrace{\lambda^2 e^{\lambda^2/2} \mathbb{P} \left \{ Y_{(2)} \le \tau \right \}}_{:= \textsf{i}} +  \underbrace{\frac{4\lambda^2}{ \log 2+\tau} \frac{1}{1-\tfrac{2\sqrt{2}\lambda}{\sqrt{\log 2+\tau}}}}_{:= \textsf{ii}}\, .
 \end{eqnarray*}
 By Lemma \ref{lem:stupid}, 
\begin{math}
(\textsf{i}) \leq  \frac{v_n\lambda^2}{ 4} \pt
\end{math}\\
As $\lambda\leq 1/\sqrt{v_n} $,  $\log 2 +\tau\geq 16/v_n$ and 
\begin{math}
(\textsf{ii}) \le \frac{v_n \lambda^2}{4(1-\sqrt{v_n} \lambda)}\pt
\end{math}
\end{proof}

We may also use Theorem \ref{prp:var:hazard:dec} to provide a Bernstein inequality for the median of absolute values of a
Gaussian sample. We assume $n/2$ is an integer. 
\begin{prp}\label{prp:order-stat-gauss}
Let $v_n =8/(n \log 2)$. \\
For all $0 \le \lambda < n/(2 \sqrt{v_n})$, \\
\begin{displaymath}
\log \esp e^{\lambda(X_{(n/2)} - \esp X_{(n/2)})} \le \frac{v_n \lambda^2}{2(1- 2\lambda \sqrt{v_n/n})} \pt
\end{displaymath}
For all $t >0$, 
\begin{displaymath}
\mathbb{P} \left \{ X_{(n/2)} - \esp X_{(n/2)} > \sqrt{2 v_n t} + 2 \sqrt{v_n/n}t\right\} \le e^{-t} \pt
\end{displaymath}
\end{prp}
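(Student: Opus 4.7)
The plan is to apply the exponential Efron-Stein inequality \eqref{eq:1} of Theorem~\ref{prp:var:hazard:dec} with $k=n/2$, use R\'enyi's representation to rewrite the spacing $\Delta_{n/2}=X_{(n/2)}-X_{(n/2+1)}$ in terms of an independent exponential $E$ and the order statistic $Y_{(n/2+1)}$, dominate this pointwise by a multiple of $E$ alone via the Gaussian hazard-rate bounds of Proposition~\ref{prp:absgaussian}, and conclude by an elementary exponential-moment computation followed by the standard Cram\'er--Chernoff bound.

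Concretely, \eqref{eq:1} at $k=n/2$ reads
\[
\log\esp e^{\lambda(X_{(n/2)}-\esp X_{(n/2)})}\le\tfrac{\lambda n}{4}\,\esp\bigl[\Delta_{n/2}(e^{\lambda\Delta_{n/2}}-1)\bigr].
\]
By Theorem~\ref{thm:renyi}, $\Delta_{n/2}=\widetilde U(e^{Y_{(n/2+1)}+2E/n})-\widetilde U(e^{Y_{(n/2+1)}})$, where $E$ is standard exponential and independent of $Y_{(n/2+1)}$. Concavity of $\widetilde U\circ\exp$ (Proposition~\ref{prp:absgaussian}(i)), the hazard bound $(\widetilde U\circ\exp)'(y)\le 1/\sqrt{\kappa_1(y+\log 2)}$ (Proposition~\ref{prp:absgaussian}(ii), with $\kappa_1=1/2$), and $Y_{(n/2+1)}\ge 0$ yield the deterministic envelope
\[
\Delta_{n/2}\le\frac{2E/n}{\sqrt{(\log 2)/2}}=\frac{2\sqrt 2}{n\sqrt{\log 2}}\,E=\sqrt{v_n/n}\,E.
\]

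Since $x\mapsto x(e^{\lambda x}-1)$ is non-negative and non-decreasing on $[0,\infty)$, the envelope transfers through expectation to give, for $\mu=\lambda\sqrt{v_n/n}\in[0,1/2)$,
\[
\esp\bigl[\Delta_{n/2}(e^{\lambda\Delta_{n/2}}-1)\bigr]\le\sqrt{v_n/n}\,\esp\bigl[E(e^{\mu E}-1)\bigr]=\sqrt{v_n/n}\,\frac{\mu(2-\mu)}{(1-\mu)^2}\le\frac{2\mu\sqrt{v_n/n}}{1-2\mu},
\]
the last step using $(2-\mu)/(1-\mu)^2\le 2/(1-2\mu)$ exactly as in the proof of Proposition~\ref{prp:deviation:max:gaussian}. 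Substituting back into \eqref{eq:1} and simplifying recovers the claimed estimate
\[
\log\esp e^{\lambda(X_{(n/2)}-\esp X_{(n/2)})}\le\frac{v_n\lambda^2}{2(1-2\lambda\sqrt{v_n/n})},
\]
which exhibits $X_{(n/2)}$ as sub-gamma on the right tail with variance factor $v_n$ and scale factor $2\sqrt{v_n/n}$. The announced deviation inequality is then the classical Cram\'er--Chernoff conclusion for sub-gamma variables, optimising $\lambda$ in $\lambda t-v_n\lambda^2/(2(1-c\lambda))$.

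The only real subtlety is the pointwise-to-expectation step, which relies on the monotonicity and non-negativity of $x\mapsto x(e^{\lambda x}-1)$ so that the almost-sure envelope $\Delta_{n/2}\le\sqrt{v_n/n}\,E$ can be substituted directly inside the expectation. In contrast with the maximum treated in Proposition~\ref{prp:deviation:max:gaussian}, no case-split on the tail of $Y_{(n/2+1)}$ is required, since $Y_{(n/2+1)}\ge 0$ already forces the hazard factor $1/\sqrt{\kappa_1(Y_{(n/2+1)}+\log 2)}$ to be uniformly bounded; the rest is bookkeeping to match the Efron-Stein prefactor $k/2=n/4$ against $v_n/n$.
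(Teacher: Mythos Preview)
Your proof is correct and follows the same route as the paper: apply the exponential Efron--Stein inequality~\eqref{eq:1} at $k=n/2$, bound $\Delta_{n/2}$ pointwise by $\sqrt{v_n/n}\,E$ via R\'enyi's representation and the hazard-rate estimate of Proposition~\ref{prp:absgaussian} (using $Y_{(n/2+1)}\ge 0$), and finish with the exponential-moment computation $\esp[E(e^{\mu E}-1)]=\mu(2-\mu)/(1-\mu)^2\le 2\mu/(1-2\mu)$ borrowed from the proof of Proposition~\ref{prp:deviation:max:gaussian}. Your write-up is in fact more explicit than the paper's, which simply says ``reasoning as in the proof of Proposition~\ref{prp:deviation:max:gaussian}'' for the last step.
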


\begin{proof}
  By Proposition \ref{prp:var:hazard:dec},
  \begin{displaymath}
    \log \esp e^{\lambda(X_{(n/2)} - \esp X_{(n/2)})} \le \frac{n}{4} \lambda \esp \left[ \Delta_{n/2} \left( e^{\lambda \Delta_{n/2} } -1 \right) \right]
  \end{displaymath}
  where $\Delta_{n/2} = X_{(n/2)} - X_{(n/2+1)} \dis U \left(2e^{E/({n/2}) +
      Y_{(n/2 +1)}} \right) -U \left( e^{\lambda Y_{(n/2+1)}} \right)$ 
where $E$ is  exponentially distributed and independent of $Y_{(n/2+1)}$. 
  By Proposition \ref{prp:absgaussian},
  \begin{displaymath}
    \lambda \Delta_{n/2} \le \frac{\sqrt{2}\lambda E}{(n/2)\sqrt{\log 2 + Y_{(n/2 +1)}}} 
\le \frac{\sqrt{2}\lambda E}{(n/2) \sqrt{\log2}} =  \lambda\sqrt{\frac{v_n}{n}}E \pt
  \end{displaymath}
Reasoning as in the proof of Proposition
  \ref{prp:deviation:max:gaussian},
  \begin{displaymath}
    \log \esp e^{\lambda(X_{(n/2)} - \esp X_{(n/2)})} \le \frac{v_n \lambda^2}{2(1- {2\lambda\sqrt{v_n/n}})} \pt
  \end{displaymath}
\end{proof}

As the hazard rate $\phi(x)/\overline{\Phi}(x)$ of the Gaussian distribution tends to $0$ as $x$  tends to $-\infty$, the preceding approach 
does not work when dealing with order statistics of Gaussian samples.
Nevertheless,  Proposition \ref{prp:var:gaussian} paves the way to simple bounds on the variance of maxima of Gaussian samples. 
\begin{prp}
  \label{prp:max:gaussian}
  Let $X_1,\ldots,X_{n}$ be $n$ independent and identically distributed standard Gaussian random variables, let $X_{(1)} \ge \ldots \ge X_{(n)}$ be the order statistics.\\
For all $n \geq 11$, 
\begin{eqnarray*}
\var[X_{(1)}] &\le&  \frac{8/\log 2}{\log(n/2)-\log (1+4\log\log(n/2))} + 2^{-n}+ \exp(-\tfrac{n}{8})+ \frac{4\pi}{n} \pt 
\end{eqnarray*}
\end{prp}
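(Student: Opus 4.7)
The plan is to exploit Gaussian sign-symmetry in order to reduce the problem to Proposition~\ref{prp:var:gaussian} (which applies to absolute values). The key observation is that $(X_1,\ldots,X_n)\dis(\epsilon_1|X_1|,\ldots,\epsilon_n|X_n|)$ with $(\epsilon_i)$ i.i.d.\ Rademacher and independent of $(|X_i|)$; letting $N=\#\{i:\epsilon_i=+1\}$ denote the corresponding $\operatorname{Binomial}(n,1/2)$ count, which is independent of $(|X_i|)$, one has $X_{(1)}\dis\max_{i\le N}|X_i|$ on $\{N\ge 1\}$ and $X_{(1)}=-\min_{i\le n}|X_i|$ on $\{N=0\}$. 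The law of total variance then splits the target as $\var[X_{(1)}]=\esp[\var[X_{(1)}\mid N]]+\var[\esp[X_{(1)}\mid N]]$, and I would bound these two summands separately.

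For the first summand, when $N=k\ge 1$ the conditional law of $X_{(1)}$ is that of the maximum of $k$ i.i.d.\ half-normals, so Proposition~\ref{prp:var:gaussian} gives $\var[X_{(1)}\mid N=k]\le f(k):=(8/\log 2)/[\log(2k)-\log(1+4\log\log(2k))]$, a decreasing function of $k$. Splitting at $k=\lceil n/4\rceil$ and invoking Hoeffding's inequality $\proba(N<\lceil n/4\rceil)\le\exp(-n/8)$, the main event contributes $f(\lceil n/4\rceil)$, which is dominated by the stated main term; the exceptional event $\{1\le N<\lceil n/4\rceil\}$ contributes an error absorbed into $\exp(-n/8)$; and the event $\{N=0\}$ (of probability $2^{-n}$) contributes into $2^{-n}$ since the corresponding conditional variance---namely $\var[\min_{i\le n}|X_i|]$---is $O(1/n^2)$.

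For the second summand, set $M_k:=\esp[X_{(1)}\mid N=k]$ and apply Efron-Stein to $M_N$ viewed as a function of the i.i.d.\ Bernoullis $B_i=\IND_{\epsilon_i=+1}$: this gives $\var[M_N]\le (n/4)\esp[(M_{N_{-1}+1}-M_{N_{-1}})^2]$ with $N_{-1}\sim\operatorname{Binomial}(n-1,1/2)$. The sharp input is the discrete-derivative estimate
\[
M_{k+1}-M_k=\int_0^\infty \widetilde{F}(x)^k(1-\widetilde{F}(x))\,\mathrm{d}x\;\le\; \sqrt{\pi/2}\,/(k+1)\qquad(k\ge 1),
\]
where $\widetilde{F}=2\Phi-1$ is the half-normal CDF; this follows from the change of variables $u=\widetilde{F}(x)$ together with the standard tail bound $\overline{\Phi}(x)\le\tfrac{1}{2} e^{-x^2/2}$ valid for all $x\ge 0$. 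Substituting and using Hoeffding once more to show $\esp[(N_{-1}+1)^{-2}\IND_{N_{-1}\ge 1}]\le 16/(n-1)^2+\exp(-(n-1)/8)$, this contribution fits within $4\pi/n$, with the jump $|M_1-M_0|$ on the event $\{N_{-1}=0\}$ absorbed into $2^{-n}$. The main obstacle will be to track the constants at this last step carefully so as not to exceed the $4\pi/n$ budget; the crude $L^\infty$ bound $M_{k+1}-M_k\le\sqrt{\pi/2}$ would yield only $\var[M_N]=O(n)$, so the sharp $1/(k+1)$ rate---and the Gaussian tail bound that produces it---is essential.
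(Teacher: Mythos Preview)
Your plan matches the paper's proof: same sign decomposition $X_i=\epsilon_i|X_i|$, same conditioning on $N=\#\{i:\epsilon_i=+1\}$, same treatment of $\esp[\var(X_{(1)}\mid N)]$ via a Hoeffding split at $n/4$, Proposition~\ref{prp:var:gaussian} on the main event, the Poincar\'e bound $\var\le 1$ on $\{1\le N<n/4\}$, and the $\{N=0\}$ case contributing $2^{-n}$.

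For $\var(\esp[X_{(1)}\mid N])$ the paper takes a slightly different route that sidesteps the constant-tracking you flag as the main obstacle. Rather than bounding $M_{k+1}-M_k$ directly, it couples the conditional maxima through R\'enyi's representation, writing $M_k=\esp\big[\widetilde U\big(\exp\sum_{i=1}^k E_i/i\big)\big]$, and uses the concavity of $\widetilde U\circ\exp$ together with the same hazard bound $1/h\le\sqrt{\pi/2}$ to obtain $(M_N-M_{N'})_+\le\sqrt{\pi/2}\,(H_N-H_{N'})_+$; then Efron--Stein (deletion form) on $H_N=\sum_{i\le N}1/i$ gives $\var(H_N)\le\esp[\IND_{N\ge1}/N]\le e^{-n/8}+4/n\le 8/n$, so the $4\pi/n$ budget is met without further work. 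Your integral identity $M_{k+1}-M_k\le\sqrt{\pi/2}/(k+1)$ is the discrete analogue of exactly this estimate and also succeeds, but the Hoeffding split you propose for $\esp[(N_{-1}+1)^{-2}]$ is too lossy at $n=11$; you need instead the sharper binomial bound $\esp[(N_{-1}+1)^{-2}]\le 2\,\esp[((N_{-1}+1)(N_{-1}+2))^{-1}]\le 8/(n(n+1))$, which gives $(n\pi/8)\cdot 8/(n(n+1))=\pi/(n+1)$. One minor correction: the $\{N_{-1}=0\}$ contribution is of order $n2^{-n}$, so it is absorbed by $\exp(-n/8)$ (or $4\pi/n$), not by $2^{-n}$.
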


\begin{proof}[Proof of Proposition~\ref{prp:max:gaussian}.]
We may generate $n$ independent standard Gaussian random variables in two steps:  first generate $n$ independent
random signs ($\epsilon_1,\ldots, \epsilon_n$:  $\proba\{ \epsilon_i=1\}=1-\proba\{ \epsilon_i=-1\}=1/2$), then generate 
absolute values ($V_1,\ldots,V_n$), the resulting sample ($X_1, \ldots, X_n$) 
is obtained as $X_i=\epsilon_i V_i$.  Let $N$ be the number of positive random signs. 
\begin{displaymath}
  \var(X_{(k)}) =  \underbrace{\esp \left[ \var\big( X_{(k)} \mid \sigma(N) \big)\right]}_{\textsf{i}} +
 \underbrace{\var\left( \esp\left[ X_{(k)}\mid \sigma({N})\right] \right)}_{\textsf{ii}}\, . 
\end{displaymath}
Conditionally on $N=m$, if $k\leq m$, $ X_{(k)}$ is distributed as the $k^{\text{th}}$ order statistic of a sample 
of $m$ independent absolute values of Gaussian random variables. If $k>m$, $X_{(k)}$ is negative, its 
conditional variance is equal to the variance of the statistics of order $n-k+1$ in a sample of size $n-m$. 
Hence, letting $V_{(k)}^m$ denote the $k^{\text{th}}$ order statistic of a sample 
of $n$ independent absolute values of Gaussian random variables.

For $k=1$, $\textsf{(ii)}\ll\textsf{(i)}$, as  
\begin{eqnarray*}
  \textsf{(i)} & = & \sum_{m=1}^n \binom{n}{m} 2^{-n} \var\big( V^m_{(1)} \big) +\binom{n}{0} 2^{-n} \var\big( V^{n}_{(n)} \big) \\
&\leq & \sum_{m=1}^{n/4} \binom{n}{m} 2^{-n} + \var\big( V^{n/4}_{(1)} \big) + 2^{-n} \\
& \leq & \exp(-n/8) +\var\big( V^{n/4}_{(1)} \big) + 2^{-n}  \, 
\end{eqnarray*}
where the middle term can be upper bounded using  Proposition \ref{prp:var:gaussian}.
 
Let $H_N$ be  the random harmonic number $H_N= \sum_{i=1}^N1/i,$
\begin{eqnarray*}
  \textsf{(ii)}  & = &  \esp_{N,N'}  \left[\Big( \esp[X_{(1)} \mid N]- \esp[X_{(1)} \mid N']\Big)_+^2 \right]
\\
&= & \esp_{N,N'}  \left[ \left(\esp \left[ \widetilde{U}\Big( \exp\Big( \textstyle{\sum_{i=1}^N \tfrac{E_i}{i}}\Big) \Big)-\widetilde{U}\Big( \exp\Big( \textstyle{\sum_{i=1}^{N'} \tfrac{E_i}{i}}\Big) \Big)\right] \right)_+^2\right]\\ 
& \leq & \esp_{N,N'} \left[ \left( 1/h\big(\widetilde{U}\big( \exp\big( \textstyle{\sum_{i=1}^{N'} \tfrac{E_i}{i}}\big) \big)\big)   (H_N-H_{N'})\right)^2_+\right]\\
& \leq & \frac{\pi}{2} \esp_{N,N'} \Big[ (H_N-H_{N'})_+^2\Big] \\
& = & \frac{\pi}{2} \var(H_N) \, . 
\end{eqnarray*}
Now, as $N= \sum_{i=1}^n (1+\epsilon_i)/2$,  
letting $Z=H_N$ and $Z_i= \sum_{j=1}^{N-(1+\epsilon_i)/2} 1/j$, by Efron-Stein inequality, $\var{Z} \leq \esp[0\wedge 1/N].$
Finally, using Hoeffding inequality in a crude way leads to  $\esp[0\wedge 1/N]\leq \exp(-n/8)+ 4/n\leq 8/n$. We may conclude 
by $\textsf{ii}\leq (4\pi)/n$. 
\end{proof}

\begin{rem}
Trading simplicity for tightness, sharper  bounds on $\textsf{ii}$  could be derived and show that $\textsf{ii}=O(1/(n\log n))$.    
\end{rem}

\bibliographystyle{abbrv}


\begin{thebibliography}{10}

\bibitem{boucheron:lugosi:massart:2003}
S.~Boucheron, G.~Lugosi, and P.~Massart.
\newblock Concentration inequalities using the entropy method.
\newblock {\em Annals of Probability}, 31:1583--1614, 2003.

\bibitem{dHaFei06}
L.~{de Haan} and A.~Ferreira.
\newblock {\em Extreme value theory}.
\newblock Springer-Verlag, 2006.

\bibitem{efron:stein:1981}
B.~Efron and C.~Stein.
\newblock The jackknife estimate of variance.
\newblock {\em Annals of Statistics}, 9(3):586--596, 1981.

\bibitem{ledoux:2001}
M.~Ledoux.
\newblock {\em The concentration of measure phenomenon}.
\newblock AMS, 2001.

\bibitem{Led03}
M.~Ledoux.
\newblock {A remark on hypercontractivity and tail inequalities for the largest
  eigenvalues of random matrices}.
\newblock {\em S{\'e}minaire de probabilit{\'e}s XXXVII}, pages 360--369, 2003.

\bibitem{massart:2000}
P.~Massart.
\newblock About the constants in Talagrand's concentration inequality.
\newblock {\em Annals of Probability}, 28:863--885, 2000.

\bibitem{massart:2003}
P.~Massart.
\newblock {\em Concentration inequalities and model selection. Ecole d'Et{\'e}
  de Probabilit{\'e} de Saint-Flour {\sc xxxiv}}, volume 1896 of {\em Lecture
  Notes in Mathematics}.
\newblock Springer-Verlag, 2007.

\bibitem{Mil74}
R.~Miller.
\newblock The jackknife : a review.
\newblock {\em Biometrika}, 61:1--15, 1974.

\bibitem{ShaoWu89}
J.~Shao and C.~F.~J. Wu.
\newblock A general theory for jackknife variance estimation.
\newblock {\em The Annals of Statistics}, 17:1176--1197, 1989.

\bibitem{tillich2001dii}
J.~Tillich and G.~Z{\'e}mor.
\newblock Discrete isoperimetric inequalities and the probability of a
  decoding error.
\newblock {\em Combinatorics, Probability and Computing}, 9(05):465--479, 2001.

\bibitem{vandervaart:1998}
A.~van~der Vaart.
\newblock {\em Asymptotic statistics}.
\newblock Cambridge University Press, 1998.

\end{thebibliography}

\appendix 

\label{sec:extreme-value-theory}

\section{Proof of Proposition \ref{prp:cheap:tight}}
\label{sec:proof-prop-}

Let $Y_{(1)}$ denote the maximum of a sample of absolute values of 
independent exponentially distributed random variables, so that $X_{(1)}\sim \widetilde{U}(e^{Y_{(1)}}).$

Thanks to the concavity of $\widetilde{U}\circ \exp$, and the
  fact that $\overline{\Phi}(x)/\phi(x)\leq 1/x$ for $x>0,$
  $\widetilde{U}(\exp(Y_{(1)}))-\widetilde{U}(\exp(H_n))\leq (Y_{(1)}-H_n)/(\widetilde{U}(\exp(H_n)).$ Now, $Y_{(1)}$ satisfies a Bernstein inequality
with variance factor not larger than  $\var(Y_{(1)})\leq \pi^2/6\leq 2 $ and scale factor not larger than $1$, so 
  \begin{displaymath}
\proba\{
  Y_{(1)} -H_n \geq t \widetilde{U}(\exp(H_n))\}\leq
  \exp\Big(-\frac{t^2\widetilde{U}(\exp(H_n))^2}{2(2+ t\widetilde{U}(\exp(H_n))/3)}\Big).
\end{displaymath}
Agreeing on $\delta_n= \widetilde{U}(e^{H_n}) -\EXP \widetilde{U}(e^{Y_{(1)}})$, using the monotonicity of $x^2/(2+x/3)$ and $\log n \leq H_n$, we obtain the first part of the proposition.

The second-order extended regular variation condition satisfied  the Gaussian distribution (See \cite{dHaFei06} Exercise 2.9 p. 61) allows to assert
\begin{math}
\widetilde{U}(n)^3\left(   \widetilde{U}(e^{x}n)-\widetilde{U}(n)-\tfrac{x}{\widetilde{U}(n)}\right) \rightarrow - \frac{x^2}{2}-x \, . 
\end{math}
This suggests  that
\begin{displaymath}
\widetilde{U}(\exp(H_n))^3\left(   \EXP \widetilde{U}(e^{Y_{(1)}-H_n}e^{H_n})-\widetilde{U}(e^{H_n})-\frac{\EXP Y_{(1)}-H_n}{\widetilde{U}(\exp(H_n))}\right) \rightarrow - \frac{\pi^2}{12} \, ,
\end{displaymath}
or that the order of magnitude of $\widetilde{U}\big(e^{H_n}\big)-\EXP X_{(1)}$ is $O(1/\widetilde{U}(n)^3)$ which is small with respect to $1/\widetilde{U}(n)$. 

Use Theorem B.3.10 from \cite{dHaFei06}, to ensure that for all $\delta, \epsilon>0$, for $n,x$ such that  $\min(n, n\exp(x))\geq t_0(\epsilon,\delta)$, 
\begin{math}
\left| \widetilde{U}(n)^3\left(   \widetilde{U}(e^{x}n)-\widetilde{U}(n)-\tfrac{x}{\widetilde{U}(n)}\right)  + \frac{x^2}{2}+x \right| \leq \epsilon \exp(\delta |x|) \, . 
\end{math} 
The probability that $Y_{(1)}\leq H_n/2$  is less than $\exp(-H_n/3)$.
When for $Y_{(1)}-H_n\leq 0$,  the  supremum  of 
  \begin{displaymath}
\left|  \widetilde{U}(e^{H_n})^3\left(   \widetilde{U}(e^{Y_{(1)}-H_n}e^{H_n})-\widetilde{U}(e^{H_n})-\frac{ Y_{(1)}-H_n}{\widetilde{U}(e^{H_n})}\right) +
\frac{ (Y_{(1)}-H_n)^2}{2}+{ Y_{(1)}-H_n}{} \right|\, ,
\end{displaymath}
is achieved when $Y_{(1)}-H_n=-H_n$, it is less than  $4 (\log n) ^2$. 
The dominated convergence theorem allows to conclude.

\end{document}